\documentclass[11pt]{article}
\usepackage{amsfonts}
\usepackage{amsmath,color}
\usepackage[makeroom]{cancel}
\usepackage{bbm}
\usepackage[affil-it]{authblk}

\textheight 205 true mm \textwidth  150 true mm \oddsidemargin
2.5true mm \evensidemargin 2.5 true mm

%
% equations
%

\newcommand{\beq}{\begin{equation}}
\newcommand{\eeq}{\end{equation}}
\newcommand{\bea}{\begin{eqnarray}}
\newcommand{\eea}{\end{eqnarray}}
\newcommand{\beas}{\begin{eqnarray*}}
\newcommand{\eeas}{\end{eqnarray*}}

%
% theorem/proposition/etc.
%
\newtheorem{theorem}{Theorem}[section]

\newtheorem{definition}[theorem]{Definition}

\newtheorem{corollary}[theorem]{Corollary}

\newtheorem{remark}[theorem]{Remark}
\newtheorem{example}[theorem]{Example}
\newtheorem{examples}[theorem]{Examples}
\newtheorem{foo}[theorem]{Remarks}

%
% proof environment
%
\newenvironment{proof}{\addvspace{\medskipamount}\par\noindent{\it
Proof}.}
{\unskip\nobreak\hfill$\Box$\par\addvspace{\medskipamount}}

\DeclareMathOperator\arctanh{arctanh}
\DeclareMathOperator\arcosh{arcosh}

  % angular brackets for projection

    % round brackets
   % curly brackets
     % edgy brackets

%\newcommand{\p}[1]{{\rm P}\left[#1\right]}
%\newcommand{\E}[1]{{\rm E}\left[#1\right]}

     % Norm
     % absolute value

  % angular brackets for projection

    % round brackets
   % curly brackets
     % edgy brackets

%\newcommand{\p}[1]{{\rm P}\left[#1\right]}

     % Norm
     % absolute value
%\newcommand{\R}[1]{\mathbb{R}}     % absolute value

\newcommand{\p}{\mathbb P}

\newcommand{\M}{\mathbb M}
\newcommand{\bH}{\mathbb H}
\newcommand{\bS}{\mathbb S}
\newcommand{\CH}{\mathbb{CH}^n}

\newcommand{\R}{\mathbb R}

\newcommand{\E}{\mathbb E}
\parindent=0pt

\title{Stochastic areas, Winding numbers and Hopf fibrations}
%\author{Fabrice Baudoin\footnote{First author supported in part by Grant NSF-DMS 15-11-328}, Jing Wang}

%\date{Department of Mathematics, Purdue University \\
 %West Lafayette, IN, USA}

\author{Fabrice Baudoin%
  \thanks{Author supported in part by Grant NSF-DMS 15-11-328}}
\affil{Department of Mathematics, University of Connecticut}

\author{Jing Wang}
\affil{Department of Mathematics, University of Illinois}

\date{}

\begin{document}
\maketitle

\begin{abstract}
We define and study stochastic areas processes associated with  Brownian motions on the complex symmetric spaces $\mathbb{CP}^n$ and $\mathbb{CH}^n$. The characteristic functions of those processes are computed  and  limit theorems are obtained. In the case $n=1$, we also study windings of the Brownian motion on those spaces and compute the limit distributions. For $\mathbb{CP}^n$ the geometry of the Hopf fibration plays a central role, whereas for $\mathbb{CH}^n$ it is the anti-de Sitter fibration.
\end{abstract}

\begin{center}
\textit{Dedicated to Marc Yor}
\end{center}

\tableofcontents

\section{Introduction}

By its simplicity, its number of far reaching applications, and its connections to many areas of mathematics,  the L\'evy's area formula is undoubtedly among the most important and beautiful formulas in stochastic calculus. In this paper, we describe analogues and consequences of the formula on  the complex symmetric spaces $\mathbb{CP}^n$ and $\mathbb{CH}^n$.

\

\textbf{The L\'evy's area formula}

\

Let $Z_t=X_t+iY_t$, $t\ge0 $, be a  Brownian motion in the complex plane such that $Z_0=0$. The algebraic area swept out by the path of $Z$ up to time $t$ is given by
\begin{equation*}
S_t=\int_{Z[0,t]} xdy-yx=\int_0^t X_s dY_s -Y_sdX_s,
\end{equation*}
where the stochastic integral is an It\^o integral, or equivalently a Stratonovitch integral since the quadratic covariation between $X$ and $Y$ is 0. The L\'evy's area formula
\begin{equation}\label{LA}
\mathbb{E}\left( e^{i\lambda S_t} | Z_t=z\right)=\frac{\lambda t}{\sinh \lambda t} e^{-\frac{|z|^2}{2t}(\lambda t \coth \lambda t -1) }
\end{equation}
was originally proved in \cite{Levy} by using a series expansion of $Z$. The formula nowadays admits many different proofs. A particularly elegant probabilistic approach is due to Yor \cite{Yor} (see also \cite{williams}). The first observation is that, due to the invariance by rotations of $Z$, one has for every $\lambda \in \R$,
\[
\mathbb{E}\left( e^{i\lambda S_t} | Z_t=z\right)=\mathbb{E}\left( \left. e^{-\frac{\lambda^2}{2} \int_0^t |Z_s|^2 ds} \right| |Z_t|=|z|  \right).
\] 
One considers then the new probability
\[
\mathbb{P}_{/ \mathcal{F}_t}^\lambda = \exp \left( \frac{\lambda}{2}(|Z_t|^2 -2t) -\frac{\lambda^2}{2} \int_0^t |Z_s|^2 ds \right)\mathbb{P}_{/ \mathcal{F}_t}
\]
under which, thanks to Girsanov theorem, $(Z_t)_{t \ge 0}$ is a Gaussian process (an Ornstein-Uhlenbeck process). Formula \eqref{LA} then easily follows from standard computations on Gaussian measures.

\

Somewhat surprisingly, formula \eqref{LA} and the stochastic area process $(S_t)_{t \ge 0}$ appear in many different contexts. For instance, formula \eqref{LA} has  been used by Bismut \cite{Bi1,Bi2} in a probabilistic approach to index theory and allows to construct explicit parametrices for the heat equation on vector bundles (see \cite{B1}). Also, the Mellin transform of $S_t$ is closely related to the Riemann zeta function (see \cite{Biane}). We also point out that the stochastic area process is a central piece in the rough paths theory construction (see \cite{FV}). 

\

The stochastic area process is also intimately connected to sub-Riemannian geometry. More precisely, in his paper \cite{Gaveau}, Gaveau actually observes that the $2n+1$-dimensional process $(Z^1_t,\dots, Z^n_t, S_t)$ is a horizontal Brownian motion on the $2n+1$-dimensional isotropic Heisenberg group, where $Z^j_t=X^j_t+iY^j_t$ are independent Brownian motions on the complex plane and $S_t=\sum_{j=1}^n\int_0^t X^j_s dY^j_s -Y^j_sdX^j_s$. As a consequence of 3-dimensional example, formula \eqref{LA} yields an expression for the heat kernel of the  sub-Laplacian on the Heisenberg group.

\

\textbf{Stochastic area in $\mathbb{CP}^n$ }

\

In the first part of the paper, for a Brownian motion $(W_t)_{t\ge 0}$ on the projective space  $\mathbb{CP}^n$, we introduce and study a natural functional $\theta$ of $W$ that plays the role of a generalized stochastic area. It can be written as
\[
\theta_t=\int_{W[0,t]} \alpha
\]
where $\alpha$ is a one-form on $\mathbb{CP}^n$ such that almost everywhere $d\alpha$ is the K\"ahler form of $\mathbb{CP}^n$. This functional $\theta$ enjoys as many symmetries as the stochastic area  process in the plane. In particular, paralleling the Gaveau's geometric interpretation of the stochastic area in the plane, we prove (see Theorem \ref{horizon}) that $\theta$ can actually be interpreted as the fiber motion of the horizontal Brownian motion of the Hopf fibration
\[
\mathbf{U}(1) \to \bS^{2n+1} \to \mathbb{CP}^n.
\]
This geometric interpretation allows us to work in a system of coordinates that displays enough symmetries to carry out Yor's method. As a byproduct (see Theorem \ref{FThj}), we obtain an analogue for $\mathbb{CP}^n$ of the L\'evy's area formula \eqref{LA}. As a nice consequence, we prove (see Theorem \ref{limit CP})  that when $t \to +\infty$, in distribution
\[
\frac{\theta_t}{t} \to \mathcal{C}_n,
\]
where $\mathcal{C}_n$ is a Cauchy distribution with parameter $n$. \

\

\textbf{Stochastic area in $\mathbb{CH}^n$ }

\

In the second part of the paper, we carry out a similar program for the complex hyperbolic space $\mathbb{CH}^n$. For a Brownian motion $(W_t)_{t \ge 0}$ on $\mathbb{CH}^n$, we introduce a functional
\[
\theta_t=\int_{W[0,t]} \alpha
\]
where $\alpha$ is a one-form on $\mathbb{CH}^n$ such that  $d\alpha$ is the K\"ahler form of $\mathbb{CH}^n$. The process $\theta$ is then interpreted (see Theorem \ref{FGHT}) as the fiber motion of the horizontal Brownian motion of the anti-de Sitter fibration
\[
\mathbf{U}(1) \to \bH^{2n+1} \to \mathbb{CH}^n.
\]
As before, this interpretation allows us to study the law of $\theta_t$ and to compute its characteristic function. We can get a little further than in the case of $\mathbb{CP}^n$. In particular, when $n=1$, we get an explicit form for the density of the stochastic area of the Brownian loops in $\mathbb{CH}^1$  (see Theorem \ref{HJKLP}):  For $ t>0$, and $\theta \in \R$,
\[
\mathbb{P} \left( \theta_t \in d\theta | W_t=0 \right)=\frac{1}{C(t)} \frac{e^{-\frac{\theta^2}{2t}}}{\cosh^2 \left( \frac{\pi \theta}{t}\right)} d\theta.
\]
This formula is surprisingly close to the corresponding formula for the stochastic area of the Brownian loop in the plane (see \cite{D})
\[
\mathbb{P} \left( S_t \in ds | Z_t=0 \right)=\frac{1}{C(t)} \frac{1}{\cosh^2 \left( \frac{\pi s}{t}\right)} ds,
\]
and it would be interesting to find the connection between the two.  For any $n\ge 1$, we prove then (see Theorem \ref{LimitCHn}) that when $t \to +\infty$, the following convergence in distribution takes place
\[
\frac{\theta_t}{\sqrt{t}} \to \mathcal{N}(0,1)
\]
where $\mathcal{N}(0,1)$ is a normal distribution with mean 0 and variance 1. It is worth pointing out that the limit distribution is therefore independent of the complex dimension $n$. 

\

\textbf{Winding numbers}

\

In the case $n=1$, stochastic area processes are closely related to  winding numbers  $\phi_t$ of the Brownian motion around a fixed point.  In the case of $\mathbb{CP}^1$, by adapting Yor's method \cite{Yor1},  we recover the fact  that when $t \to +\infty$, in distribution we have a Spitzer's type behavior 
\[
\frac{\phi_t}{t} \to \mathcal{C}_2,
\]
where $\mathcal{C}_2$ is a Cauchy distribution with parameter 2.  This is not surprising since Spitzer's law is understood on general compact Riemann surfaces (see \cite{watanabe}). In the case of $\mathbb{CH}^1$  we prove that when $t \to \infty$, in distribution we have
\[
{\phi_t}\to \mathcal{C},
\]
where $\mathcal{C}$ is a Cauchy distribution with parameter $-\ln |z|$, where $z$ is the starting point of the Brownian motion. The finiteness of  $\phi_t$ when $t \to \infty$ is obviously not surprising since the Brownian motion on $\mathbb{CH}^1$ is transient.

\section{Generalized stochastic areas in $\mathbb{CP}^n$}

\subsection{Stochastic area and Hopf fibration}

The complex projective space $\mathbb{CP}^n$ can be defined as the set of complex lines in $\mathbb{C}^{n+1}$. To parametrize points in $\mathbb{CP}^n$, it is convenient to  use the local inhomogeneous coordinates given by $w_j=z_j/z_{n+1}$, $1 \le j \le n$, $z \in \mathbb{C}^{n+1}$, $z_{n+1}\neq 0$. In these coordinates, the Riemannian structure of $\mathbb{CP}^n$ is easily worked out from the standard Riemannian structure of the Euclidean sphere. Indeed, if we consider the unit sphere
\[
\bS^{2n+1}=\lbrace z=(z_1,\cdots,z_{n+1})\in \mathbb{C}^{n+1}, \| z \| =1\rbrace,
\]
then, at each point,  the differential of the map $\bS^{2n+1} -\{z_{n+1}=0 \}  \to \mathbb{CP}^n$, $ (z_1,\cdots,z_{n+1}) \to (z_1/z_{n+1},\cdots,z_n/z_{n+1})$ is an isometry between the orthogonal space of its kernel and the corresponding tangent space to $\mathbb{CP}^n$. This map actually is the local description of a globally defined Riemannian submersion $\pi: \bS^{2n+1} \to \mathbb{CP}^n$, that can be constructed as follows. There is an isometric group action of $\mathbb{S}^1=\mathbf{U}(1)$ on $\bS^{2n+1}$ which is  defined by $$e^{i\theta}\cdot(z_1,\cdots, z_n) = (e^{i\theta} z_1,\cdots, e^{i\theta} z_n). $$

The quotient space $\bS^{2n+1} / \mathbf{U}(1)$ can be identified with $\mathbb{CP}^n$ and the projection map $$\pi :  \bS^{2n+1} \to \mathbb{CP}^n$$ is a Riemannian submersion with totally geodesic fibers isometric to $\mathbf{U}(1)$. The fibration
\[
\mathbf{U}(1) \to \bS^{2n+1} \to \mathbb{CP}^n
\]
 is called the Hopf fibration. 

\

The submersion $\pi$ allows one to construct the Brownian motion on $\mathbb{CP}^n$ from the Brownian motion on $\bS^{2n+1}$. Indeed,  let $(z(t))_{t \ge 0}$ be a Brownian motion on  $\bS^{2n+1}$ started at the north pole \footnote{We will call north pole the point with complex coordinates $z_1=0,\cdots, z_{n+1}=1$. }. Since $\mathbb{P}( \exists t \ge 0, z_{n+1}(t)=0 )=0$, one can use the local description of the submersion $\pi$ to infer that
\begin{align}\label{BMsphere}
w(t)= \left( \frac{z_1(t)}{z_{n+1}(t)} , \cdots, \frac{z_n(t)}{z_{n+1}(t)}\right), \quad t \ge 0,
\end{align}
is a Brownian motion on $\mathbb{CP}^n$.

\

Consider the one-form $\alpha$ on $\mathbb{CP}^n$ given by the push-forward: $\alpha \left( \frac{\partial}{\partial w_i} \right)=\eta  \left( \frac{\partial}{\partial w_i} \right)$, where $\eta$ is the standard contact form on $\bS^{2n+1}$. In local inhomogeneous coordinates,  one has
\[
\alpha=\frac{i}{2(1+|w|^2)}\sum_{j=1}^n(w_jd\overline{w_j}-\overline{w_j}dw_j)
\]
where $|w|^2=\sum_{j=1}^n |w_j|^2$. It is easy to compute that
\begin{align*}
d\alpha=\frac{i}{(1+|w|^2)^2}\left((1+|w|^2)\sum_{j=1}^ndw_j\wedge d\overline{w}_j-\sum_{j,k=1}^n\overline{w}_jw_k dw_j\wedge d\overline{w}_k \right).
\end{align*}
Thus $d\alpha$ is almost everywhere the K\"ahler form   that induces the standard Fubini-Study metric on $\mathbb{CP}^n$. Observe that $\alpha$ is only defined in the  inhomogeneous coordinates chart and may not be smoothly extended to the whole $\mathbb{CP}^n$, because on compact K\"ahler manifolds the K\"ahler form can not be exact.

\begin{definition}
Let $(w(t))_{t \ge 0}$ be a Brownian motion on $\mathbb{CP}^n$ started at $0$\footnote{We call $0$ the point with inhomogeneous coordinates $w_1=0,\cdots, w_{n}=0$}. The generalized stochastic area process of $(w(t))_{t \ge 0}$ is defined by
\[
\theta(t)=\int_{w[0,t]} \alpha=\frac{i}{2}\sum_{j=1}^n \int_0^t \frac{w_j(s)  d\overline{w_j}(s)-\overline{w_j}(s) dw_j(s)}{1+|w(s)|^2},
\]
where the above stochastic integrals are understood in the Stratonovitch, or equivalently in the It\^o sense.
\end{definition}

Our goal is to study the distribution of $\theta(t)$. In particular we aim to prove (see Theorem \ref{limit CP}) that in distribution
\[
\frac{\theta(t)}{t} \to \mathcal{C}_n,
\]
where $\mathcal{C}_n$ is a Cauchy distribution with parameter $n$

\

The first step is to show that the stochastic area process is closely related to a diffusion process on $\bS^{2n+1}$.

 \begin{theorem}\label{horizon}
 Let $(w(t))_{t \ge 0}$ be a Brownian motion on $\mathbb{CP}^n$ started at 0 and $(\theta(t))_{t\ge 0}$ be its stochastic area process. The $\mathbb{S}^{2n+1}$-valued diffusion process
 \[
 X_t=\frac{e^{-i\theta(t)} }{\sqrt{1+|w(t)|^2}} \left( w(t),1\right), \quad t \ge 0
 \]
 is the horizontal lift at the north pole of $(w(t))_{t \ge 0}$ by the submersion $\pi$.
 \end{theorem}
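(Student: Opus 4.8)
The plan is to verify that the process $(X_t)_{t\ge 0}$ satisfies the two properties characterizing the horizontal lift of $(w(t))_{t\ge 0}$ at the north pole through the Riemannian submersion $\pi$: (i) it is a lift, i.e. $X_0=(0,\dots,0,1)$ and $\pi(X_t)=w(t)$ for all $t\ge 0$; and (ii) it is horizontal, i.e. the contact form $\eta$ of $\bS^{2n+1}$ — the connection form of the Hopf fibration — annihilates its Stratonovitch differential: $\eta(\circ\, dX_t)\equiv 0$. Since the horizontal lift of a continuous semimartingale is the unique process with these two properties, this will identify $X$ (and, solving a Stratonovitch SDE with smooth coefficients on the compact manifold $\bS^{2n+1}$, it is in particular a diffusion, as asserted). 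Throughout I would use Stratonovitch integrals, which is harmless because, as already noted, the Itô and Stratonovitch versions of $\theta$ coincide, and convenient because Stratonovitch calculus obeys the ordinary chain rule.

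First I would dispatch property (i), which is immediate. Since $|e^{-i\theta(t)}|=1$, one has $\|X_t\|^2=\frac{|w(t)|^2+1}{1+|w(t)|^2}=1$, so $X_t\in\bS^{2n+1}$; at $t=0$, from $w(0)=0$ and $\theta(0)=0$ we get $X_0=(0,\dots,0,1)$; and the last coordinate of $X_t$, namely $e^{-i\theta(t)}/\sqrt{1+|w(t)|^2}$, never vanishes, so $\pi(X_t)$ is well defined in the inhomogeneous chart and dividing the first $n$ coordinates of $X_t$ by the last one returns $(w_1(t),\dots,w_n(t))=w(t)$.

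For property (ii), I would write $X_t=e^{-i\theta(t)}Y_t$ with $Y_t=\frac{1}{\sqrt{1+|w(t)|^2}}(w(t),1)$, the canonical section of $\pi$ over the chart (so $Y_t\in\bS^{2n+1}$ as well); by the definition of $\alpha$ as the push-forward of $\eta$ — or directly from the explicit formula for $\alpha$ — one has $Y^\ast\eta=\alpha$, hence $\theta(t)=\int_{w[0,t]}\alpha=\int_0^t\eta(\circ\, dY_s)$. The chain rule gives $\circ\, dX_t=-i\,(\circ\, d\theta(t))\,X_t+e^{-i\theta(t)}(\circ\, dY_t)$. Since $iX_t$ is the unit vertical generator of the $\mathbf U(1)$-action at $X_t$ (so $\eta_{X_t}(iX_t)=1$), and since the $\mathbf U(1)$-invariance of $\eta$ gives $\eta_{X_t}\bigl(e^{-i\theta(t)}(\circ\, dY_t)\bigr)=\eta_{Y_t}(\circ\, dY_t)=\circ\, d\theta(t)$, we obtain
\[
\eta_{X_t}(\circ\, dX_t)=-(\circ\, d\theta(t))\,\eta_{X_t}(iX_t)+\eta_{Y_t}(\circ\, dY_t)=-\,\circ\, d\theta(t)+\circ\, d\theta(t)=0,
\]
which is precisely horizontality. (The same cancellation is visible by hand: with $\rho=\sqrt{1+|w|^2}$ one computes $\sum_{k=1}^{n+1}\overline{Y}_k\,\circ\, dY_k=\rho^{-2}\sum_{k=1}^{n}\overline{w}_k\,\circ\, dw_k-\rho^{-1}\,\circ\, d\rho$, whose imaginary part is $\rho^{-2}\,\mathrm{Im}\sum_k\overline{w}_k\,\circ\, dw_k=\circ\, d\theta(t)$, since $\rho^{-1}\,\circ\, d\rho$ is real and the last identity is the definition of $\theta$.)

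The main obstacle is not any hard estimate but the bookkeeping with conventions: one must take $\eta$ in exactly the normalization under which $\alpha$ is its push-forward — equivalently $\alpha=Y^\ast\eta$, with $\eta(ip)=1$ on the unit vertical generator — and then carefully track which of the differentials above are real and thus drop out under $\mathrm{Im}$. Once this is pinned down, the identity $\eta_{Y_t}(\circ\, dY_t)=\circ\, d\theta(t)$ is essentially a reformulation of the definitions of $\alpha$ and $\theta$, and the appearance of $e^{-i\theta(t)}$ in $X_t$ (rather than $e^{+i\theta(t)}$) is precisely what makes the term $-\circ\, d\theta(t)$ produced by differentiating that factor cancel against $\eta_{Y_t}(\circ\, dY_t)$; everything else is routine.
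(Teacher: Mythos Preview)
Your proof is correct and follows essentially the same route as the paper: both verify that $X_t$ projects to $w(t)$ and that the contact form $\eta$ annihilates its differential, using the identity $Y^\ast\eta=\alpha$ (equivalently $\eta=d\theta+\alpha$ in the paper's trivializing coordinates $(w,\theta)\mapsto \frac{e^{i\theta}}{\sqrt{1+|w|^2}}(w,1)$). The only cosmetic difference is that the paper first carries out the computation for a smooth curve $\gamma$ and then transfers it to the Brownian path by the usual ``similarly'' argument, whereas you work directly with Stratonovitch differentials and invoke $\mathbf U(1)$-invariance of $\eta$; this is arguably cleaner, but the content is the same.
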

 
 \begin{proof}
 The key point is that the submersion $\pi$ is compatible with the contact structure of $\bS^{2n+1}$. More precisely, the horizontal distribution of this submersion is the kernel of the standard contact form on $\bS^{2n+1}$ 
\[
\eta=-\frac{i}{2}\sum_{j=1}^{n+1}(\overline{z_j}dz_j-z_jd\overline{z_j}),
\]
and the fibers of the submersion are the orbits of the Reeb vector field.
\

As above, let $(w_1,\cdots, w_n)$ be  the local inhomogeneous coordinates for $\mathbb{CP}^n$ given by $w_j=z_j/z_{n+1}$, and $\theta$ be the local fiber coordinate, i.e., $(w_1, \cdots, w_n)$ parametrizes the complex lines passing through the north pole, while $\theta$ determines a point on the line that is of unit distance from the north pole. These coordinates are given by the map
\begin{align}\label{invar}
(w,\theta)\longrightarrow \frac{e^{i\theta} }{\sqrt{1+|w|^2}} \left( w,1\right),
\end{align}
where  $\theta \in \R/2\pi\mathbb{Z}$, and $w \in \mathbb{CP}^n$. In these coordinates, we compute that
 \begin{align*}
\eta =  d\theta+\frac{i}{2(1+|w|^2)}\sum_{k=1}^n(w_kd\overline{w_k}-\overline{w_k}dw_k).
\end{align*}
As a consequence, the horizontal lift to $\bS^{2n+1}$  of the vector field $\frac{\partial}{\partial w_i}$ is given by $\frac{\partial}{\partial w_i}-\alpha\left(\frac{\partial}{\partial w_i} \right)\frac{\partial}{\partial \theta}$. If we consider now a smooth curve $\gamma$ starting at 0 in $\mathbb{CP}^n$, we can write in the inhomogeneous system of coordinates
\[
\gamma(t)=(\gamma_1(t),\cdots,\gamma_n(t)).
\]
Since
\[
\gamma'(t)=\sum_{i=1}^n \gamma'_i(t)\frac{\partial}{\partial w_i},
\]
we deduce that the horizontal lift, $\bar{\gamma}$, of $\gamma$ at the north pole is given in the set of coordinates \eqref{invar}, by
\begin{align*}
\bar{\gamma}'(t)&=\sum_{i=1}^n \gamma'_i(t)\left(\frac{\partial}{\partial w_i}-\alpha\left(\frac{\partial}{\partial w_i} \right)\frac{\partial}{\partial \theta} \right) \\
 &=\sum_{i=1}^n \gamma'_i(t)\frac{\partial}{\partial w_i}- \sum_{i=1}^n \alpha\left(\frac{\partial}{\partial w_i} \right) \gamma'_i(t)\frac{\partial}{\partial \theta}.
\end{align*}
As a consequence,
\[
\bar{\gamma}(t)=\frac{e^{-i\theta(t)} }{\sqrt{1+|\gamma(t)|^2}} \left( \gamma(t),1\right),
\]
with
\[
\theta(t) =\sum_{i=1}^n \int_0^t \alpha\left(\frac{\partial}{\partial w_i} \right)  \gamma_i'(t)dt =\int_{\gamma[0,t]} \alpha.
\]

Similarly, the lift of the Brownian motion $(w(t))_{t \ge 0}$ is $\frac{e^{-i\theta(t)} }{\sqrt{1+|w(t)|^2}} \left( w(t),1\right)$ with
\[
\theta(t) =\sum_{i=1}^n \int_0^t \alpha\left(\frac{\partial}{\partial w_i} \right)  dw_i =\int_{w[0,t]} \alpha.
\]
\end{proof}

\begin{remark}
The diffusion process $(X_t)_{t \ge 0}$ is therefore the  horizontal Brownian motion on $\mathbb{S}^{2n+1}$. It is a hypoelliptic diffusion whose heat kernel was explicitly computed for $n=1$ in \cite{BB1} and for $n \ge 2$ in \cite{BW1}.
\end{remark}

The following theorem is the key to study the distribution of the process $(\theta(t))_{t \ge 0}$.

\begin{theorem}\label{diff1}
Let $r(t)=\arctan |w(t)|$. The process $\left( r(t), \theta(t)\right)_{t \ge 0}$ is a diffusion with generator
 \[
L=\frac{1}{2} \left( \frac{\partial^2}{\partial r^2}+((2n-1)\cot r-\tan r)\frac{\partial}{\partial r}+\tan^2r\frac{\partial^2}{\partial \theta^2}\right).
 \]
 As a consequence the following equality in distribution holds
 \[
\left( r(t) ,\theta(t) \right)_{t \ge 0}=\left( r(t),B_{\int_0^t \tan^2 r(s)ds}\right)_{t \ge 0},
\]
where $(B_t)_{t \ge 0}$ is a standard Brownian motion independent from $r$.
\end{theorem}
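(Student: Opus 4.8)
The plan is to compute the generator of the $\bS^{2n+1}$-valued horizontal Brownian motion $X_t$ from Theorem \ref{horizon} in the coordinates $(r,\phi,\theta)$ adapted to the Hopf fibration, where $r = \arctan|w|$ is the polar-type radial coordinate on $\mathbb{CP}^n$, $\phi$ collects the remaining angular coordinates on the geodesic sphere of radius $r$ in $\mathbb{CP}^n$, and $\theta$ is the fiber coordinate of \eqref{invar}. First I would recall (or derive) the expression of the Laplace--Beltrami operator of the Fubini--Study metric on $\mathbb{CP}^n$ in geodesic polar coordinates centered at $0$: its radial part is $\frac{\partial^2}{\partial r^2} + \big((2n-1)\cot r - \tan r\big)\frac{\partial}{\partial r}$, the standard Jacobian factor coming from the volume growth $(\sin r)^{2n-1}\cos r$ of geodesic spheres in $\mathbb{CP}^n$. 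Since $r$ depends only on the $\mathbb{CP}^n$-projection $w(t)$ of $X_t$ and the radial part of the generator decouples from the spherical part, the process $r(t)$ is by itself a diffusion with generator $\frac12\big(\frac{\partial^2}{\partial r^2} + ((2n-1)\cot r - \tan r)\frac{\partial}{\partial r}\big)$; this is the first assertion restricted to the $r$-component.

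Next I would obtain the $\theta$-dynamics. By Theorem \ref{horizon}, $X_t$ is the horizontal lift of $w(t)$, so along the lifted path $d\theta$ is determined by horizontality: $\theta(t) = \int_{w[0,t]}\alpha$ is precisely the vertical displacement forced by $\eta(\bar\gamma') = 0$, i.e. $d\theta = -\frac{i}{2(1+|w|^2)}\sum_k (w_k\,d\overline{w_k} - \overline{w_k}\,dw_k)$ along $w(t)$. The cleanest route is then to lift the sub-Laplacian computation: the generator of $X_t$ as a hypoelliptic diffusion on $\bS^{2n+1}$ is $\frac12 \sum_i (\widetilde{Z_i})^2$ where $\widetilde{Z_i} = \frac{\partial}{\partial w_i} - \alpha(\frac{\partial}{\partial w_i})\frac{\partial}{\partial\theta}$ are the horizontal lifts of an orthonormal frame on $\mathbb{CP}^n$; pushing this forward to the functions of $(r,\theta)$ only, one finds that the $\frac{\partial}{\partial\theta}$-terms assemble into $\tan^2 r\,\frac{\partial^2}{\partial\theta^2}$ with no first-order $\frac{\partial}{\partial\theta}$ term and no cross term $\frac{\partial^2}{\partial r\,\partial\theta}$ (the cross term vanishes because the fiber direction is orthogonal to the horizontal radial direction, and $\alpha$ annihilates the radial vector field — indeed $\alpha(\partial_r)=0$ since the radial geodesic from $0$ is horizontal). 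The coefficient $\tan^2 r$ is exactly $|w|^2/(1+|w|^2) \cdot (1+|w|^2)$ rewritten via $|w| = \tan r$; concretely, $\sum_i \big(\alpha(\frac{\partial}{\partial w_i})\big)^2$ computed against the Fubini--Study orthonormal frame at radius $r$ gives $\tan^2 r$. This yields the claimed form of $L$.

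For the final equality in distribution, note that $L$ acts on functions of $\theta$ only through the operator $\frac12 \tan^2 r(t)\,\frac{\partial^2}{\partial\theta^2}$, whose coefficient depends on the path solely via $r$. Hence, conditionally on the whole trajectory $(r(s))_{s\ge 0}$, the process $\theta(t)$ is a continuous martingale (it is a stochastic integral in the Brownian motions driving the horizontal frame, with no drift) whose quadratic variation is $\int_0^t \tan^2 r(s)\,ds$; its increments are moreover independent of $r$ because the martingale part driving $\theta$ is built from the component of the horizontal noise orthogonal to the radial direction, which is independent of the radial noise. By the Dambis--Dubins--Schwarz theorem we may write $\theta(t) = B_{\int_0^t \tan^2 r(s)\,ds}$ for a Brownian motion $B$ independent of $r$, which is the assertion.

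The main obstacle is the bookkeeping in the second step: verifying cleanly that the generator in the $(r,\theta)$ variables has no first-order $\partial_\theta$ term and no $\partial_r\partial_\theta$ cross term, i.e. that passing to the marginal $(r,\theta)$ is consistent (that the pair is genuinely Markov). This is where the geometry — $\alpha$ vanishing on the radial field, totally geodesic fibers, and the $\mathbf{U}(1)$-invariance of \eqref{invar} — does the work, and I would lean on the rotational symmetry of the Fubini--Study metric around $0$ to argue that all angular cross-terms average out, rather than expanding the change of variables by brute force.
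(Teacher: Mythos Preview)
Your proposal is correct and follows the same overall strategy as the paper: compute the generator of the horizontal lift $X_t$ from Theorem~\ref{horizon} and restrict it to functions of the radial and fiber variables. The execution differs, however. The paper stays in the inhomogeneous coordinates $(w,\theta)$: it takes the explicit formula \eqref{eq-Laplacian-CPn} for $\Delta_{\mathbb{CP}^n}$, substitutes each $\partial/\partial w_j$ by its horizontal lift $\partial/\partial w_j + \tfrac{i}{2}\tfrac{\overline{w_j}}{1+\rho^2}\partial/\partial\theta$, obtains the full horizontal Laplacian \eqref{genera}, and then restricts to functions of $(\rho,\theta)$ with $\rho=|w|$; the cross term $-2i(|w|^2+1)(\mathcal R-\overline{\mathcal R})\,\partial/\partial\theta$ is seen to vanish on such functions because $(\mathcal R-\overline{\mathcal R})$ kills radial functions, and the change of variable $\rho=\tan r$ finishes. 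Your route instead works in geodesic polar coordinates from the outset and appeals to geometry (that $\alpha$ annihilates the radial field, rotational symmetry around $0$) to argue that the $\partial_r\partial_\theta$ and first-order $\partial_\theta$ terms are absent. Both arguments are valid; the paper's buys you a short mechanical computation with nothing left to justify, while yours is more conceptual but, as you note yourself, leaves the ``no cross term'' verification as the step requiring care. For the skew-product conclusion the paper simply reads it off the form of $L$ (no $\partial_\theta$ drift, no $\partial_r\partial_\theta$ term, $\theta$-diffusion coefficient depending only on $r$), which is the cleaner way to phrase what you argue via Dambis--Dubins--Schwarz plus orthogonality of the driving noises.
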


\begin{proof}
We first compute, in the coordinates \eqref{invar}, the generator of the diffusion $X$ introduced in the previous theorem. It is known that the Laplace-Beltrami operator for the Fubini-Study metric on $\mathbb{CP}^n$ is
\begin{equation}\label{eq-Laplacian-CPn}
\Delta_{\mathbb{CP}^n}=4(1+|w|^2)\sum_{k=1}^n \frac{\partial^2}{\partial w_k \partial\overline{w_k}}+ 4(1+|w|^2)\mathcal{R} \overline{\mathcal{R}}
\end{equation}
where
\[
\mathcal{R}=\sum_{j=1}^n w_j \frac{\partial}{\partial w_j}.
\] 
Since $X$ is the horizontal lift of $w$, the generator of $X$ is $\frac{1}{2}\bar{\Delta}_{\mathbb{CP}^n}$ where $\bar{\Delta}_{\mathbb{CP}^n}$ is the horizontal lift to  $\bS^{2n+1}$ of $\Delta_{\mathbb{CP}^n}$. As we have seen,  the horizontal lift to $\bS^{2n+1}$  of the vector field $\frac{\partial}{\partial w_i}$ is given by 
\[
\frac{\partial}{\partial w_i}-\alpha\left(\frac{\partial}{\partial w_i} \right)\frac{\partial}{\partial \theta}=\frac{\partial}{\partial w_j}+\frac{i}{2}\frac{\overline{w_j}}{1+\rho^2}\frac{\partial}{\partial\theta},
\]
where $\rho=|w|$. Substituting $\frac{\partial}{\partial w_i}$ by its lift in the expression of $\Delta_{\mathbb{CP}^n}$ yields
\begin{equation}\label{genera}
\bar{\Delta}_{\mathbb{CP}^n}=4(1+|w|^2)\sum_{k=1}^n \frac{\partial^2}{\partial w_k \partial\overline{w_k}}+ 4(1+|w|^2)\mathcal{R} \overline{\mathcal{R}}+|w|^2\ \frac{\partial^2}{\partial \theta^2}-2i(|w|^2+1)(\mathcal{R} -\overline{\mathcal{R}})\frac{\partial}{\partial\theta}.
\end{equation}
We compute then that $\bar{\Delta}_{\mathbb{CP}^n}$ acts on functions depending only on $(\rho, \theta)$ as
\[
\left(1+\rho^2\right)^2\frac{\partial^2}{\partial \rho^2}+\left(\frac{(2n-1)(1+\rho^2)}{\rho}+(1+\rho^2)\rho\right)\frac{\partial}{\partial \rho}+\rho^2\frac{\partial^2 }{\partial \theta^2}.
\]
The change of variable $\rho =\tan r$ finishes the proof.
\end{proof}

\begin{remark}
As a consequence of the previous proposition,  $r(t)=\arctan |w(t)|$ is a Jacobi diffusion (see Appendix for more details) with generator
\[
\frac{1}{2} \left( \frac{\partial^2}{\partial r^2}+((2n-1)\cot r-\tan r)\frac{\partial}{\partial r}\right).
\]
On the other hand, if $(\beta(t))_{t \ge 0}$ is a Brownian motion in $\mathbb{C}^{n+1}$, then from a classical skew-product decomposition (see \cite{PR}),
\begin{align*}
\frac{\beta(t)}{| \beta(t) |}=  z\left( \int_0^t \frac{ds}{ | \beta(s)|^2}\right),
\end{align*}
where $z(t)$ is a Brownian motion on the sphere $\bS^{2n+1}$. We deduce therefore from \eqref{BMsphere} that
\begin{align}\label{plki}
\frac{ \sqrt{ \sum_{i=1}^n | \beta_i (t)|^2}}{|\beta_{n+1} (t)|  }=\tan r \left(  \int_0^t \frac{ds}{ | \beta(s)|^2} \right).
\end{align}
The process $\sqrt{ \sum_{i=1}^n | \beta_i (t)|^2}$ is a Bessel process with dimension $2n$ and $| \beta_{n+1} (t)|$ is a Bessel process with dimension 2. The equality \eqref{plki} is therefore a special case of the general skew-product representation of Jacobi processes that was discovered by Warren and Yor \cite{WY}.
\end{remark}

\begin{remark}
The operator
\[
\frac{1}{2} \left( \frac{\partial^2}{\partial r^2}+((2n-1)\cot r-\tan r)\frac{\partial}{\partial r}+\tan^2r\frac{\partial^2}{\partial \theta^2}\right)
\]
with periodic boundary conditions on the endpoints of the interval $\theta \in [0,2\pi]$ was studied in \cite{BB1,BW1}. We cannot use those results in our case since $L$ does not come  with the same boundary conditions (in our case $\theta \in \R$).
\end{remark}

\subsection{Characteristic function of the stochastic area and limit theorem}

In this section, we study the characteristic function of $\theta(t)$.
Let $\lambda \ge 0$, $r \in [0,\pi/2)$ and 
\[
I(\lambda,r)=\mathbb{E}\left(e^{i \lambda \theta(t)}\mid r(t)=r\right).
\]
From Theorem \ref{diff1}, we know that
\begin{align*}
I(\lambda,r)& =\mathbb{E}\left(e^{i \lambda B_{\int_0^t \tan^2 r(s)ds}}\mid r(t)=r\right) \\
 &=\mathbb{E}\left(e^{- \frac{\lambda^2}{2} \int_0^t \tan^2 r(s)ds}\mid r(t)=r\right) 
\end{align*}
and $r$ is a diffusion with Jacobi generator
\[
\mathcal{L}^{n-1,0}=\frac{1}{2} \left( \frac{\partial^2}{\partial r^2}+((2n-1)\cot r-\tan r)\frac{\partial}{\partial r}\right)
\]
started at $0$.  The Jacobi generator 
\[
\mathcal{L}^{\alpha,\beta}=\frac{1}{2} \frac{\partial^2}{\partial r^2}+\left(\left(\alpha+\frac{1}{2}\right)\cot r-\left(\beta+\frac{1}{2}\right) \tan r\right)\frac{\partial}{\partial r}, \quad \alpha,\beta >-1
\]
is studied in details in the Appendix to which we refer for further details. We denote by $q_t^{\alpha,\beta}(r_0,r)$ the transition density with respect to the Lebesgue measure of the diffusion with generator $\mathcal{L}^{\alpha,\beta}$.
\begin{theorem}\label{FThj}
For $\lambda \ge 0$, $r \in [0,\pi/2)$, and $t >0$ we have
\begin{equation}\label{eq-ft-cond}
\mathbb{E}\left(e^{i \lambda \theta(t)}\mid r(t)=r\right)=\mathbb{E}\left(e^{- \frac{\lambda^2}{2} \int_0^t \tan^2 r(s)ds}\mid r(t)=r\right) =\frac{e^{-n \lambda t}}{(\cos r)^\lambda} \frac{q_t^{n-1,\lambda}(0,r)}{q_t^{n-1,0}(0,r)}.
\end{equation}
\end{theorem}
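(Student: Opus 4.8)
The plan is to use a Girsanov-type change of measure (Yor's method), working with the Jacobi diffusion $r$ whose generator is $\mathcal{L}^{n-1,0}$. The quantity to evaluate is
\[
\mathbb{E}\left(e^{-\frac{\lambda^2}{2}\int_0^t \tan^2 r(s)\,ds}\,\Big|\, r(t)=r\right),
\]
so I would first write this conditional expectation in terms of transition densities: it equals
\[
\frac{1}{q_t^{n-1,0}(0,r)}\,\tilde{q}_t(0,r),
\]
where $\tilde{q}_t(0,r)$ is the (sub-Markovian) density of the Feynman--Kac semigroup $e^{-\frac{\lambda^2}{2}\tan^2 r}$ associated with $\mathcal{L}^{n-1,0}$. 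Thus everything reduces to identifying $\tilde{q}_t$ in closed form.

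The key algebraic step is a ground-state / $h$-transform identity: I expect that conjugating $\mathcal{L}^{n-1,0}$ by the function $h(r)=(\cos r)^{\lambda}$ turns the Feynman--Kac perturbation into a genuine Jacobi generator with shifted parameter $\beta=\lambda$, up to an additive constant. Concretely, one should check by direct computation that
\[
(\cos r)^{-\lambda}\left(\mathcal{L}^{n-1,0}-\tfrac{\lambda^2}{2}\tan^2 r\right)\left((\cos r)^{\lambda}\,\cdot\right)=\mathcal{L}^{n-1,\lambda}-n\lambda .
\]
Granting this, the Feynman--Kac semigroup factorizes: for the density one gets
\[
\tilde{q}_t(0,r)=e^{-n\lambda t}\,\frac{h(r)}{h(0)}\,q_t^{n-1,\lambda}(0,r)=e^{-n\lambda t}\,(\cos r)^{\lambda}\,q_t^{n-1,\lambda}(0,r),
\]
since $h(0)=1$. (The constant $n\lambda$ is forced: it must be $(\beta+\tfrac12)-(\tfrac12)$ times something, and one reads it off from the drift coefficient $((n-\tfrac12)-(\lambda+\tfrac12)\tan^2 r - \cdots)$; the starting point $r=0$ is exactly where $\cos r=1$, which is why the formula is clean. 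Care is needed because $r=0$ is an entrance boundary for the Jacobi diffusion, so one should justify that the $h$-transform is well behaved there, e.g. by a comparison of the known small-time asymptotics of $q_t^{n-1,0}$ and $q_t^{n-1,\lambda}$ near $0$, or by an approximation argument starting from $r_0>0$ and letting $r_0\to 0$.) Dividing by $q_t^{n-1,0}(0,r)$ then yields exactly
\[
\frac{e^{-n\lambda t}}{(\cos r)^{\lambda}}\cdot\frac{q_t^{n-1,\lambda}(0,r)}{q_t^{n-1,0}(0,r)}
\]
after noting that I actually want $(\cos r)^{-\lambda}$ on the right — so I should double-check the direction of the conjugation: if the identity above holds, the density picks up $h(r)=(\cos r)^{\lambda}$, and the claimed formula has $(\cos r)^{-\lambda}$, which means the correct conjugating function is $h(r)=(\cos r)^{-\lambda}$, i.e. one should verify instead that $(\cos r)^{\lambda}\big(\mathcal{L}^{n-1,0}-\tfrac{\lambda^2}{2}\tan^2 r\big)\big((\cos r)^{-\lambda}\,\cdot\big)=\mathcal{L}^{n-1,\lambda}-n\lambda$. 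This sign bookkeeping in the conjugation is the first thing to nail down.

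The main obstacle, then, is twofold: first, carrying out the conjugation computation correctly and confirming that the perturbed operator is genuinely $\mathcal{L}^{n-1,\lambda}$ (not some operator with a different, $r$-dependent zeroth-order term) together with the exact constant $-n\lambda$; and second, handling the degenerate boundary behavior at $r=0$ so that the identity at the level of transition densities (with a fixed starting point $0$) is legitimate rather than merely formal at the level of generators. For the second point I would invoke the explicit expressions for $q_t^{\alpha,\beta}$ and their boundary asymptotics developed in the Appendix, so that the ratio $q_t^{n-1,\lambda}(0,r)/q_t^{n-1,0}(0,r)$ and the prefactor $(\cos r)^{-\lambda}$ combine to a bounded, smooth function of $r$ on $[0,\pi/2)$, equal to $1$ at $r=0$ up to the $e^{-n\lambda t}$ factor, consistent with $I(\lambda,0)=\mathbb{E}(e^{-\frac{\lambda^2}{2}\int_0^t\tan^2 r(s)\,ds})$ being $\le 1$. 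Finally, the equality of the two expectations on the left of \eqref{eq-ft-cond} is immediate from Theorem \ref{diff1} (the time-changed Brownian motion representation of $\theta$ and conditional Gaussian integration), so no extra work is needed there.
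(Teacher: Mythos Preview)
Your approach is essentially the paper's: the ground-state/$h$-transform you propose is the analytic counterpart of the Girsanov change of measure the paper carries out. The paper writes the exponential local martingale $D_t=\exp\bigl(-\lambda\int_0^t\tan r(s)\,d\gamma(s)-\tfrac{\lambda^2}{2}\int_0^t\tan^2 r(s)\,ds\bigr)$, computes via It\^o's formula that $D_t=e^{n\lambda t}(\cos r(t))^\lambda e^{-\frac{\lambda^2}{2}\int_0^t\tan^2 r(s)\,ds}$, observes that $D_t\le e^{n\lambda t}$ so $D$ is a true martingale, and then Girsanov turns $r$ into an $\mathcal{L}^{n-1,\lambda}$-diffusion under $\mathbb{P}^\lambda$. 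This is exactly your conjugation identity, written probabilistically.

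Your sign confusion is not in the conjugation but in the passage from semigroups to kernels. Your first identity
\[
(\cos r)^{-\lambda}\Bigl(\mathcal{L}^{n-1,0}-\tfrac{\lambda^2}{2}\tan^2 r\Bigr)\bigl((\cos r)^{\lambda}\,\cdot\bigr)=\mathcal{L}^{n-1,\lambda}-n\lambda
\]
is correct with $h(r)=(\cos r)^{\lambda}$. But from $h^{-1}\tilde P_t(h\,\cdot)=e^{-n\lambda t}P_t^{\lambda}$ one reads off, at the level of transition densities, $\tilde q_t(r_0,r)=e^{-n\lambda t}\,\dfrac{h(r_0)}{h(r)}\,q_t^{n-1,\lambda}(r_0,r)$, i.e.\ the ratio is $h(r_0)/h(r)$, not $h(r)/h(r_0)$. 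With $r_0=0$ this gives $\tilde q_t(0,r)=e^{-n\lambda t}(\cos r)^{-\lambda}q_t^{n-1,\lambda}(0,r)$, and dividing by $q_t^{n-1,0}(0,r)$ yields the stated formula directly; no second conjugation is needed. The paper's Girsanov formulation has the further advantage that the bound $D_t\le e^{n\lambda t}$ immediately settles the true-martingale question, so the boundary concern you raise at $r_0=0$ does not require a separate limiting argument.
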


\begin{proof}
We have
\[
dr(t)= \frac{1}{2} \left( (2n-1)\cot r(t)-\tan r(t) \right)dt+d\gamma(t),
\]
where $\gamma$ is a standard Brownian motion.
Consider the local martingale
\begin{align*}
D_t& =\exp \left( -\lambda \int_0^t \tan r(s) d\gamma(s) -\frac{\lambda^2}{2}  \int_0^t \tan^2 r(s) ds \right)  \\
 &=\exp \left( -\lambda \int_0^t \tan r(s) dr(s)+\frac{\lambda}{2}(2n-1)t -\frac{\lambda +\lambda^2}{2}  \int_0^t \tan^2 r(s) ds \right) 
\end{align*}
From It\^o's formula, we have
\begin{align*}
\ln \cos r(t) & =-\int_0^t \tan r(s) dr(s)-\frac{1}{2} \int_0^t \frac{ds}{\cos^2 r(s)} \\
 &=-\int_0^t \tan r(s) dr(s)-\frac{1}{2} \int_0^t \tan^2 r(s) ds-\frac{1}{2} t.
\end{align*}
As a consequence, we deduce that
\[
D_t =e^{n\lambda t} (\cos r(t))^\lambda e^{- \frac{\lambda^2}{2} \int_0^t \tan^2 r(s)ds}.
\]
This expression of $D$ implies that almost surely $D_t \le e^{n\lambda t}$ and thus $D$ is a martingale. Let us denote by $\mathcal{F}$ the natural filtration of $r$ and consider the probability measure $\mathbb{P}^\lambda$ defined by
\[
\mathbb{P}_{/ \mathcal{F}_t} ^\lambda=e^{n\lambda t} (\cos r(t))^\lambda e^{- \frac{\lambda^2}{2} \int_0^t \tan^2 r(s)ds} \mathbb{P}_{/ \mathcal{F}_t}.
\]
We have then for every bounded and Borel function $f$ on $[0,\pi /2]$,
\begin{align*}
\mathbb{E}\left(f(r(t))e^{- \frac{\lambda^2}{2} \int_0^t \tan^2 r(s)ds}\right)=e^{-n\lambda t} \mathbb{E}^\lambda \left( \frac{f(r(t))}{(\cos r(t))^\lambda} \right)
\end{align*}
From Girsanov theorem, the process
\[
\beta(t)=\gamma(t)+\lambda \int_0^t \tan r(s) ds
\]
is a Brownian motion under the probability $\mathbb{P}^\lambda$. Since
\[
dr(t)= \frac{1}{2} \left( (2n-1)\cot r(t)-(2\lambda +1)\tan r(t) \right)dt+d\beta(t),
\]
the proof is complete.
\end{proof}

We deduce an expression for the Fourier transform of $\theta(t)$.
\begin{corollary}\label{cor-theta}
For $\lambda \in \mathbb{R}$ and $t \ge 0$,
\[
\mathbb{E}\left(e^{i \lambda \theta(t)}\right)=e^{-n | \lambda | t}\int_0^{\pi /2}  \frac{q_t^{n-1,| \lambda |}(0,r)}{(\cos r)^{| \lambda|}} dr 
\]
\end{corollary}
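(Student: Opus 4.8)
The plan is to deduce Corollary~\ref{cor-theta} directly from the conditional formula in Theorem~\ref{FThj} by integrating against the law of $r(t)$. First I would treat the case $\lambda \ge 0$. Conditioning on the value of $r(t)$ and using that $r$ is started at $0$ with Jacobi generator $\mathcal L^{n-1,0}$, so that $r(t)$ has density $q_t^{n-1,0}(0,r)$ on $[0,\pi/2)$, I write
\[
\mathbb{E}\left(e^{i\lambda\theta(t)}\right)=\int_0^{\pi/2}\mathbb{E}\left(e^{i\lambda\theta(t)}\mid r(t)=r\right)q_t^{n-1,0}(0,r)\,dr.
\]
Now I substitute the right-hand side of \eqref{eq-ft-cond}, namely $e^{-n\lambda t}(\cos r)^{-\lambda}\,q_t^{n-1,\lambda}(0,r)/q_t^{n-1,0}(0,r)$. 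The factor $q_t^{n-1,0}(0,r)$ cancels cleanly, the constant $e^{-n\lambda t}$ pulls out of the integral, and what remains is exactly $e^{-n\lambda t}\int_0^{\pi/2}q_t^{n-1,\lambda}(0,r)(\cos r)^{-\lambda}\,dr$, which is the claimed expression with $|\lambda|=\lambda$.

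To finish, I would extend to all real $\lambda$. Since $\theta(t)$ is real-valued, $\mathbb{E}(e^{i\lambda\theta(t)})$ is the complex conjugate of $\mathbb{E}(e^{-i\lambda\theta(t)})$; but more is true here — by the rotational symmetry of the construction (the one-form $\alpha$ changes sign under $w_j \mapsto \overline{w_j}$, or equivalently $\theta(t)$ has the same law as $-\theta(t)$, as is already implicit in the representation $\theta(t)\overset{d}{=}B_{\int_0^t\tan^2 r(s)\,ds}$ with $B$ a symmetric Brownian motion), the characteristic function is even in $\lambda$. Hence the value at $\lambda$ equals the value at $|\lambda|$, which is covered by the computation above, giving the formula with $|\lambda|$ throughout. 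The case $\lambda=0$ (and $t=0$) is trivial since both sides equal $1$ once one notes $\int_0^{\pi/2}q_t^{n-1,0}(0,r)\,dr=1$.

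The only point requiring a little care — and the place I would expect a referee to look — is the justification of conditioning, i.e. that $\mathbb{E}(e^{i\lambda\theta(t)}\mid r(t)=r)$ is genuinely a version of the conditional expectation that may be integrated against $q_t^{n-1,0}(0,r)\,dr$, and that Fubini applies. This is harmless because the conditional characteristic function is bounded by $1$ in modulus (indeed equals $\mathbb{E}(e^{-\frac{\lambda^2}{2}\int_0^t\tan^2 r(s)\,ds}\mid r(t)=r)\in(0,1]$), and the unconditional $\mathbb{E}(e^{-\frac{\lambda^2}{2}\int_0^t\tan^2 r(s)\,ds})$ is finite; the tower property then gives the integral identity with no integrability obstruction. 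So there is no real obstacle: the corollary is essentially a bookkeeping consequence of Theorem~\ref{FThj} together with the symmetry $\theta(t)\overset{d}{=}-\theta(t)$.
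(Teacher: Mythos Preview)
Your argument is correct and is exactly the approach the paper takes: the authors simply state that the corollary is a direct consequence of \eqref{eq-ft-cond}, which amounts to integrating the conditional formula against the density $q_t^{n-1,0}(0,r)$ of $r(t)$ and cancelling. Your added remarks on the symmetry $\theta(t)\overset{d}{=}-\theta(t)$ to handle $\lambda<0$ and on the integrability justification are more explicit than what the paper writes, but entirely in the same spirit.
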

\begin{proof}
This is a direct consequence of \eqref{eq-ft-cond}.
\end{proof}
We are now in position to prove the main result of the section.

\begin{theorem}\label{limit CP}
When $t \to +\infty$, the following convergence in distribution takes place
\[
\frac{\theta(t)}{t} \to \mathcal{C}_n,
\]
where $\mathcal{C}_n$ is a Cauchy distribution with parameter $n$.
\end{theorem}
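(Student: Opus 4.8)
The plan is to derive the limiting distribution of $\theta(t)/t$ from the characteristic function formula in Corollary \ref{cor-theta}. By L\'evy's continuity theorem, it suffices to show that for every fixed $\lambda \in \R$,
\[
\mathbb{E}\left(e^{i\lambda \theta(t)/t}\right)=e^{-n|\lambda|}\int_0^{\pi/2} \frac{q_t^{n-1,|\lambda|/t}(0,r)}{(\cos r)^{|\lambda|/t}}\,dr \longrightarrow e^{-n|\lambda|}
\]
as $t\to+\infty$, since $e^{-n|\lambda|}$ is precisely the characteristic function of the Cauchy distribution $\mathcal{C}_n$ with parameter $n$. So the whole problem reduces to proving that the integral factor
\[
J(t):=\int_0^{\pi/2}\frac{q_t^{n-1,|\lambda|/t}(0,r)}{(\cos r)^{|\lambda|/t}}\,dr
\]
tends to $1$ as $t\to\infty$.

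First I would observe that $q_t^{n-1,0}$ is an honest probability density on $[0,\pi/2)$, so $\int_0^{\pi/2} q_t^{n-1,0}(0,r)\,dr = 1$; moreover, as $t\to\infty$ the Jacobi diffusion with generator $\mathcal{L}^{n-1,0}$ converges to its invariant measure, which is a Beta-type (Jacobi) distribution on $[0,\pi/2)$ supported away from the endpoint $\pi/2$ in a suitable averaged sense. The key point is that the exponent $|\lambda|/t$ appearing both in the Jacobi parameter $\beta = |\lambda|/t$ and in the power $(\cos r)^{-|\lambda|/t}$ goes to $0$. Thus I would want to show two things: (i) $(\cos r)^{-|\lambda|/t}\to 1$ uniformly on compact subsets of $[0,\pi/2)$ and stays controlled near $\pi/2$ because the extra $\beta$-drift $-(\beta+\tfrac12)\tan r$ in $\mathcal{L}^{n-1,\beta}$ pushes the diffusion away from $\pi/2$ exactly enough to compensate the singularity of $(\cos r)^{-\beta}$; and (ii) $q_t^{n-1,|\lambda|/t}(0,r)$ is close to $q_t^{n-1,0}(0,r)$ in $L^1$. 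For (ii) the natural tool is the explicit spectral expansion of the Jacobi transition density recalled in the Appendix: $q_t^{\alpha,\beta}$ is given by a series in Jacobi polynomials with eigenvalues depending continuously (indeed smoothly) on $\beta$, so one can pass to the limit $\beta\to 0$ term by term and control the tail uniformly in $\beta\in[0,1]$, say. Alternatively, and perhaps more cleanly, one can use the probabilistic identity from Theorem \ref{FThj} backwards: $J(t) = e^{n|\lambda|}\,\mathbb{E}\!\left(e^{-\frac{\lambda^2}{2t^2}\int_0^t \tan^2 r(s)\,ds}\right)$ where $r$ is the $\mathcal{L}^{n-1,0}$-diffusion from $0$, so I must show $\frac{1}{t^2}\int_0^t \tan^2 r(s)\,ds \to 0$ in probability (equivalently $\mathbb{E}$ of the exponential $\to e^{-n|\lambda|}$ after unfolding — wait, that can't be, so the correct reading is that $J(t)\to1$ means $\mathbb{E}(e^{-\frac{\lambda^2}{2t^2}\int_0^t\tan^2 r\,ds})\to e^{-n|\lambda|}$, which is \emph{not} simply an ergodic statement). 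Let me restate: the clean route is to show directly that the integral representation $J(t)$ converges to $1$ using dominated convergence on $[0,\pi/2)$, with the domination coming from the $\beta$-drift controlling the $(\cos r)^{-\beta}$ factor.

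Concretely, the steps in order would be: (1) write $\mathbb{E}(e^{i\lambda\theta(t)/t}) = e^{-n|\lambda|}J(t)$ using Corollary \ref{cor-theta} with $\lambda$ replaced by $\lambda/t$; (2) express $J(t) = \int_0^{\pi/2} (\cos r)^{-|\lambda|/t} q_t^{n-1,|\lambda|/t}(0,r)\,dr$ and note $(\cos r)^{-|\lambda|/t} q_t^{n-1,|\lambda|/t}(0,r) = e^{n|\lambda|/t \cdot t}\cdot(\ldots)$ — more usefully, recall from the proof of Theorem \ref{FThj} that this integrand, integrated, equals $e^{n|\lambda|}\,\mathbb{E}^{|\lambda|/t\text{-drifted}}((\cos r(t))^{-\text{stuff}})$; (3) use the spectral/heat-kernel expansion of $q_t^{n-1,\beta}$ from the Appendix together with continuity of eigenvalues and eigenfunctions in $\beta$ near $\beta = 0$ to get $q_t^{n-1,|\lambda|/t}(0,r)\to q_t^{n-1,0}(0,r)$ — but here $t\to\infty$ simultaneously, so instead use that for each fixed $t$, $q_t^{n-1,\beta}$ is jointly continuous in $(\beta,r)$ and uniformly integrable as $\beta$ ranges over $[0,\ep]$; (4) split the integral at $r = \pi/2 - \delta$ and handle the bulk by the convergence $(\cos r)^{-|\lambda|/t}\to 1$ plus $\int q_t^{n-1,|\lambda|/t}(0,r)\,dr \to \int q_t^{n-1,0}(0,r)\,dr$, and handle the tail near $\pi/2$ by showing the singularity of $(\cos r)^{-|\lambda|/t}$ is tame since $|\lambda|/t\to 0$; (5) conclude $J(t)\to1$, hence $\mathbb{E}(e^{i\lambda\theta(t)/t})\to e^{-n|\lambda|}$, and invoke L\'evy continuity.

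The main obstacle I anticipate is controlling the interplay near $r=\pi/2$ between the blow-up of $(\cos r)^{-|\lambda|/t}$ and the behavior of the Jacobi density $q_t^{n-1,|\lambda|/t}(0,r)$ there, as both the exponent and the second Jacobi parameter shrink like $1/t$ while $t$ itself grows; one needs a uniform (in large $t$) integrable bound for the product near the boundary. The cleanest fix is probably to keep things probabilistic: rewrite $J(t) = e^{n|\lambda|}\,\mathbb{E}\!\big(e^{-\frac{\lambda^2}{2t^2}\int_0^t\tan^2 r(s)\,ds}\big)$ with $r$ the $\mathcal{L}^{n-1,0}$-Jacobi diffusion — no wait, this gives $J(t)\to e^{n|\lambda|}\cdot 1 = e^{n|\lambda|}$ only if the ergodic average $\frac1{t^2}\int_0^t\tan^2 r\,ds$ were to vanish, and since $r$ has a stationary law under which $\mathbb{E}[\tan^2 r]$ is finite (for $n\geq 1$; the drift $(2n-1)\cot r$ repels from $0$ and $-\tan r$ from $\pi/2$), indeed $\frac1t\int_0^t \tan^2 r(s)\,ds \to \mathbb{E}_{\mathrm{inv}}[\tan^2 r]<\infty$ a.s., so $\frac1{t^2}\int_0^t \tan^2 r\,ds\to 0$, giving $\mathbb{E}(e^{-\frac{\lambda^2}{2t^2}\int_0^t\tan^2 r\,ds})\to 1$ by bounded convergence, hence $J(t)\to e^{n|\lambda|}$ — but that contradicts needing $J(t)\to1$. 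The resolution of this apparent sign discrepancy is exactly where care is needed: one must track whether the prefactor in the Fourier transform of Corollary \ref{cor-theta} is $e^{-n|\lambda|t}$ or whether the $(\cos r)^{-|\lambda|}$ integral itself already decays, and the honest limit computation should be organized around Theorem \ref{FThj} read as $\mathbb{E}(e^{-\frac{\lambda^2}{2}\int_0^t\tan^2 r\,ds}) = e^{-n\lambda t}\,\mathbb{E}^\lambda((\cos r(t))^{-\lambda})$ and then the large-$t$ asymptotics of the right side with $\lambda$ fixed — whose exponential rate $e^{-n\lambda t}$ must cancel against the growth of $\mathbb{E}^\lambda((\cos r(t))^{-\lambda})$ under the $\lambda$-drifted measure, leaving a constant; applying this at $\lambda/t$ and expanding to first order in $1/t$ is what produces the Cauchy characteristic function $e^{-n|\lambda|}$. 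I would carry out that first-order expansion carefully, using the appendix's formulas for $q_t^{\alpha,\beta}$ and its small-$\beta$, large-$t$ behavior, as the crux of the argument.
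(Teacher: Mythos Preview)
Your overall strategy---apply Corollary \ref{cor-theta} with $\lambda$ replaced by $\lambda/t$ and show $J(t)\to1$---is exactly what the paper does. But your proposal contains a genuine error that produces the ``apparent contradiction'' you flag and prevents you from finishing.

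The error is the assertion that $\mathbb{E}_{\mathrm{inv}}[\tan^2 r]<\infty$ for the $\mathcal{L}^{n-1,0}$-Jacobi diffusion. The invariant density is proportional to $(\sin r)^{2n-1}\cos r$ on $[0,\pi/2)$, and
\[
\int_0^{\pi/2}\tan^2 r\,(\sin r)^{2n-1}\cos r\,dr=\int_0^{\pi/2}\frac{(\sin r)^{2n+1}}{\cos r}\,dr=+\infty.
\]
Hence $\frac{1}{t}\int_0^t\tan^2 r(s)\,ds\to+\infty$ a.s., and there is no reason for $\frac{1}{t^2}\int_0^t\tan^2 r(s)\,ds\to0$. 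Indeed it cannot: since $\mathbb{E}\big(e^{i\lambda\theta(t)/t}\big)=\mathbb{E}\big(e^{-\frac{\lambda^2}{2t^2}\int_0^t\tan^2 r(s)\,ds}\big)$ must converge to $e^{-n|\lambda|}\neq1$, the rescaled additive functional has nondegenerate limiting behavior. So there is no contradiction between $J(t)\to1$ and the identity $J(t)=e^{n|\lambda|}\,\mathbb{E}\big(e^{-\frac{\lambda^2}{2t^2}\int_0^t\tan^2 r(s)\,ds}\big)$; both assert the same thing. The ``first-order expansion'' you propose at the end is also unnecessary: the factor $e^{-n|\lambda|}$ is already exact, not an approximation, and the task is purely to show $J(t)\to1$.

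The paper's route, which you hint at but never execute, is simply to insert the Appendix formula for $q_t^{n-1,\beta}(0,r)$ with $\beta=|\lambda|/t$. The prefactor $(\cos r)^{2\beta+1}$ in that formula absorbs the $(\cos r)^{-\beta}$, leaving $(\cos r)^{\beta+1}$ and eliminating any boundary singularity at $\pi/2$; for $m\ge1$ the factors $e^{-2m(m+n+\beta)t}$ kill those terms as $t\to\infty$; and the $m=0$ term tends (as $\beta\to0$) to $2n(\sin r)^{2n-1}\cos r$, which integrates to $1$ over $[0,\pi/2]$. Dominated convergence, with a bound uniform in $\beta\in[0,1]$ and $t\ge1$, finishes the proof. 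None of the $\pi/2-\delta$ splitting or tail-versus-bulk analysis you outline is needed.
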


\begin{proof}
We just need to show that for $\lambda >0$, $\lim_{t\to+\infty}\mathbb{E}\left(e^{i \lambda \theta(t)/t}\right)=e^{-n  \lambda  }$. From Corollary \ref{cor-theta} we have for every $t>0$,
\[
\mathbb{E}\left(e^{i \lambda \frac{\theta(t)}{t}}\right)=e^{-n  \lambda  }\int_0^{\pi /2}  \frac{q_t^{n-1,\frac{ \lambda }{t}}(0,r)}{(\cos r)^{ \frac{\lambda}{t}}} dr 
\]
Using the formula for $q_t^{n-1, \lambda }(0,r)$ which is given in the Appendix, we obtain that
\[
\lim_{t \to \infty}  \int_0^{\pi /2}  \frac{q_t^{n-1,\frac{ \lambda }{t}}(0,r)}{(\cos r)^{ \frac{\lambda}{t}}} dr=  \int_0^{\pi /2}  q_\infty^{n-1,0}(0,r) dr =1.
\]
\end{proof}

\subsection{Skew-product decompositions on the Berger sphere and homogenisation}

In this section, we study the connection between the  stochastic area process on $\mathbb{CP}^n$ and an interesting class of diffusions on $\bS^{2n+1}$. These diffusions naturally arise when one looks at the canonical variation of the standard metric in the direction of the fibers of the Hopf fibration (see \cite{BB}).  We compute the density of these diffusions and prove a homogenisation result when the fibers collapse. The homogenisation problem is inspired by discussions with X.M. Li  and some of her results \cite{Li1,Li2}. 

\

As we mentioned before, the quotient space $\bS^{2n+1} / \mathbf{U}(1)$ is the projective complex space $\mathbb{CP}^n$ and the projection map $\pi :  \bS^{2n+1} \to \mathbb{CP}^n$ is a Riemannian submersion with totally geodesic fibers isometric to $\mathbf{U}(1)$. The sub-bundle $\mathcal{V}$ of $\mathbf{T}\bS^{2n+1}$ formed by vectors tangent to the fibers of the submersion is referred  to as the set of \emph{vertical directions}. The sub-bundle $\mathcal{H}$ of $\mathbf{T}\bS^{2n+1}$  which is normal to $\mathcal{V}$ is referred to as the set of \emph{horizontal directions}.   The standard metric  $g$ of of $\bS^{2n+1}$ can be split as
\begin{equation*}
g=g_\mathcal{H} \oplus g_{\mathcal{V}},
\end{equation*}
where the sum is orthogonal. We introduce the one-parameter family of Riemannian metrics:
\begin{equation}\label{eq-metric-B}
g_{\lambda}=g_\mathcal{H} \oplus  \frac{1}{\lambda^2 }g_{\mathcal{V}}, \quad \lambda >0,
\end{equation}
The Riemannian manifold $(\bS^{2n+1}, g_{\lambda})$ is called the Berger\footnote{For Marcel Berger (1927-).} sphere with parameter $\lambda >0$. The case $\lambda=1$ corresponds to the standard metric on $\bS^{2n+1}$. When $\lambda \to 0$, in the Gromov-Hausdorff sense,  $(\bS^{2n+1}, g_{\lambda})$ converges to $\bS^{2n+1}$ endowed with the Carnot-Carath\'eodory metric. When $\lambda \to \infty$, $(\bS^{2n+1}, g_{\lambda})$ converges to $\mathbb{CP}^n$ endowed with its standard Fubini-Study metric.

The following skew-product decomposition  for the Brownian motion on the Berger sphere holds.

\begin{theorem}
Let $\lambda >0$. Let $(w(t))_{t \ge 0}$ be a Brownian motion on $\mathbb{CP}^n$ started at 0 and $(\theta(t))_{t\ge 0}$ be its stochastic area process. Let $(B(t))_{t \ge 0}$ be a real Brownian motion independent from $w$. The $\mathbb{S}^{2n+1}$-valued diffusion process
 \[
 \beta^\lambda(t)=\frac{e^{i\lambda B(t)-i\theta(t)} }{\sqrt{1+|w(t)|^2}} \left( w(t),1\right), \quad t \ge 0
 \]
 is a Brownian motion on the Berger sphere $(\bS^{2n+1}, g_{\lambda})$.
\end{theorem}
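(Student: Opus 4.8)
The plan is to verify that $\beta^\lambda$ solves the martingale problem for $\frac12\Delta_{g_\lambda}$ started at the north pole, where $\Delta_{g_\lambda}$ denotes the Laplace--Beltrami operator of the Berger sphere $(\bS^{2n+1},g_\lambda)$. First I would set up the fibered coordinates \eqref{invar}, temporarily writing $\psi$ for the fiber coordinate (to avoid a clash with the area process), so that a point of $\bS^{2n+1}$ reads $\frac{e^{i\psi}}{\sqrt{1+|w|^2}}(w,1)$ with $w\in\mathbb{CP}^n$ and $\psi\in\R/2\pi\mathbb{Z}$. In these coordinates $\beta^\lambda(t)$ is simply $\big(w(t),\psi(t)\big)$ with $\psi(t)=\lambda B(t)-\theta(t)$, and $\beta^\lambda(0)$ is the north pole; so everything reduces to identifying the generator of this process.

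The first ingredient is the form of $\Delta_{g_\lambda}$ in the coordinates \eqref{invar}. The key point is that the Hopf $\mathbf{U}(1)$-action is isometric for \emph{every} metric $g_\lambda$ of \eqref{eq-metric-B}, so $\pi:(\bS^{2n+1},g_\lambda)\to\mathbb{CP}^n$ (with its Fubini--Study metric) is still a Riemannian submersion with totally geodesic fibers, the fibers being round circles of length $2\pi/\lambda$ whose arc-length parameter is $\psi/\lambda$. The horizontal distribution and the metric it carries are independent of $\lambda$, hence so is the horizontal lift $\bar\Delta_{\mathbb{CP}^n}$ of $\Delta_{\mathbb{CP}^n}$ computed in \eqref{genera}. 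By the standard decomposition of the Laplacian under a Riemannian submersion with minimal (here totally geodesic) fibers one gets $\Delta_{g_\lambda}=\bar\Delta_{\mathbb{CP}^n}+\Delta_{\mathrm{fiber}}$, and since the fiber is a circle of circumference $2\pi/\lambda$ in the coordinate $\psi$, $\Delta_{\mathrm{fiber}}=\lambda^2\,\partial^2/\partial\psi^2$. Thus in \eqref{invar} one has $\Delta_{g_\lambda}=\bar\Delta_{\mathbb{CP}^n}+\lambda^2\,\partial^2/\partial\psi^2$, with $\bar\Delta_{\mathbb{CP}^n}$ as in \eqref{genera}; alternatively this expression can be extracted from the computations in \cite{BB}, and the Brownian motion on $(\bS^{2n+1},g_\lambda)$ is by definition the diffusion with generator $\frac12\Delta_{g_\lambda}$.

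The second ingredient assembles the diffusion. By Theorem \ref{horizon} together with the proof of Theorem \ref{diff1}, the process with coordinates $(w(t),-\theta(t))$ in \eqref{invar}, namely $X_t=\frac{e^{-i\theta(t)}}{\sqrt{1+|w(t)|^2}}(w(t),1)$, is the horizontal lift of $w$ and therefore the hypoelliptic diffusion with generator $\frac12\bar\Delta_{\mathbb{CP}^n}$; equivalently, $f(w(t),-\theta(t))-\frac12\int_0^t\bar\Delta_{\mathbb{CP}^n}f(X_s)\,ds$ is a local martingale for every $f\in C^\infty(\bS^{2n+1})$, relative to the filtration of $w$. I would then apply It\^o's formula to $f(\beta^\lambda(t))=f\big(w(t),-\theta(t)+\lambda B(t)\big)$: because $B$ is a Brownian motion independent of $w$ while $\theta$ is adapted to the filtration of $w$, the bracket of $\lambda B$ with the martingale part of $(w,-\theta)$ vanishes, and the only new contribution beyond $\frac12\bar\Delta_{\mathbb{CP}^n}f$ is the term $\frac{\lambda^2}{2}\,\partial^2 f/\partial\psi^2$ coming from $d\langle\lambda B\rangle_t$. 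Hence $f(\beta^\lambda(t))-\frac12\int_0^t\big(\bar\Delta_{\mathbb{CP}^n}+\lambda^2\partial^2/\partial\psi^2\big)f(\beta^\lambda(s))\,ds$ is a local martingale, i.e. $\beta^\lambda$ solves the martingale problem for $\frac12\Delta_{g_\lambda}$ from the north pole, which is precisely the assertion that $\beta^\lambda$ is a Brownian motion on $(\bS^{2n+1},g_\lambda)$.

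I expect the main obstacle to be the first ingredient: pinning down the exact form of $\Delta_{g_\lambda}$ in the coordinates \eqref{invar}, and in particular checking that its horizontal part is exactly the $\lambda$-independent operator \eqref{genera} and that its vertical part carries no spurious first-order term — this is where the geometry of the canonical variation enters, through the fact that the fibers stay totally geodesic. Once $\Delta_{g_\lambda}$ is in hand, the rest is the routine It\^o/independence bookkeeping sketched above — structurally the same computation that yields the classical L\'evy area skew-product, now carried out fiberwise over $\mathbb{CP}^n$.
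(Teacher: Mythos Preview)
Your proposal is correct and follows essentially the same route as the paper: identify the Laplace--Beltrami operator of $(\bS^{2n+1},g_\lambda)$ in the fibered coordinates as $\bar\Delta_{\mathbb{CP}^n}+\lambda^2\,\partial^2/\partial\psi^2$, then use that $(w(t),-\theta(t))$ has generator $\tfrac12\bar\Delta_{\mathbb{CP}^n}$ and that adding an independent scaled Brownian motion in the fiber coordinate contributes exactly $\tfrac{\lambda^2}{2}\,\partial^2/\partial\psi^2$. The paper compresses your It\^o/martingale-problem step into the single observation that $L_X$ and $\partial^2/\partial\theta^2$ commute, but the content is the same.
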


\begin{proof}
Let $\lambda >0$ and let $ (\beta^\lambda(t))_{t \ge 0}$ be a Brownian motion on the Berger sphere $(\bS^{2n+1}, g_{\lambda})$. We work in the coordinates \eqref{invar}. From \eqref{eq-metric-B} and \eqref{genera} the generator of $\beta^\lambda$ is
\[
2(1+|w|^2)\sum_{k=1}^n \frac{\partial^2}{\partial w_k \partial\overline{w_k}}+ 2(1+|w|^2)\mathcal{R} \overline{\mathcal{R}}+\frac{1}{2}(\lambda^2+|w|^2)\ \frac{\partial^2}{\partial \theta^2}-i(|w|^2+1)(\mathcal{R} -\overline{\mathcal{R}})\frac{\partial}{\partial\theta}.
\]
This generator can be written as 
\[
L_X+\frac{1}{2}\lambda^2 \frac{\partial^2}{\partial \theta^2},
\]
where $L_X$ is the generator the diffusion $(X_t)_{t \ge 0}$ considered in Theorem \ref{horizon}. Since $L_X$ and $\frac{\partial^2}{\partial \theta^2}$ commute, the result easily follows.
\end{proof}
This representation of the Brownian motion on the Berger sphere yields an interesting result.

\begin{corollary}
Let $\lambda >0$ and let $ (\beta^\lambda(t))_{t \ge 0}$ be a Brownian motion on the Berger sphere $(\bS^{2n+1}, g_{\lambda})$. Let $\eta$ be the standard contact form on $\bS^{2n+1}$. The process
\[
\gamma(t)=\frac{1}{\lambda} \int_{\beta^\lambda [0,t]} \eta,
\]
is a real Brownian motion.
\end{corollary}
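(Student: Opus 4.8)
The plan is to reduce the statement to the explicit representation of $\beta^\lambda$ furnished by the previous theorem and then pull the contact form $\eta$ back to the coordinates \eqref{invar}. First I would note that we may realize $(\beta^\lambda(t))_{t\ge0}$ as
\[
\beta^\lambda(t)=\frac{e^{i\lambda B(t)-i\theta(t)}}{\sqrt{1+|w(t)|^2}}\left(w(t),1\right),
\]
where $(w(t))_{t\ge0}$ is a Brownian motion on $\mathbb{CP}^n$ started at $0$, $(\theta(t))_{t\ge0}$ is its stochastic area process, and $(B(t))_{t\ge0}$ is an independent real Brownian motion. Since a Brownian motion on the Berger sphere is determined in law by its starting point (here the north pole, as throughout the paper), this entails no loss of generality.

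Next I would use the computation carried out in the proof of Theorem \ref{horizon}: in the coordinates \eqref{invar}, writing $\phi$ for the fiber coordinate in order to distinguish it from the process $\theta$, one has
\[
\eta=d\phi+\frac{i}{2(1+|w|^2)}\sum_{k=1}^n(w_kd\overline{w_k}-\overline{w_k}dw_k)=d\phi+\alpha .
\]
Reading off the coordinates of $\beta^\lambda$ from the representation above, its base component is $w(t)$ and its (continuous, real-valued) fiber component is $\phi(t)=\lambda B(t)-\theta(t)$, with $\phi(0)=0$.

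The main computation is then to pull $\eta=d\phi+\alpha$ back along the path $\beta^\lambda$ and integrate. Because the line integral $\int_{\beta^\lambda[0,t]}\eta$ is understood in the Stratonovich sense (equivalently It\^o, exactly as for the stochastic area), Stratonovich calculus obeys the ordinary chain rule, so the pullback splits and
\[
\int_{\beta^\lambda[0,t]}\eta=\big(\phi(t)-\phi(0)\big)+\int_{w[0,t]}\alpha=\big(\lambda B(t)-\theta(t)\big)+\theta(t)=\lambda B(t),
\]
using the definition $\int_{w[0,t]}\alpha=\theta(t)$ of the stochastic area process. Dividing by $\lambda$ gives $\gamma(t)=B(t)$, a standard real Brownian motion, which is the claim.

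I do not expect a substantive obstacle here; the content of the statement is entirely carried by the representation of the previous theorem. The only points requiring care are bookkeeping: (i) that $\beta^\lambda$ almost surely remains in the chart \eqref{invar}, i.e. avoids $\{z_{n+1}=0\}$, which holds for the same reason as for the Brownian motion on $\bS^{2n+1}$ used in \eqref{BMsphere}; and (ii) the legitimacy of writing $\int_{\beta^\lambda[0,t]}\eta=\int_0^t d\phi+\int_{w[0,t]}\alpha$ as a genuine pathwise identity, which follows from the Stratonovich chain rule and needs no It\^o correction since $\eta$ and its two summands are smooth one-forms on the chart.
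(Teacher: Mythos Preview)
Your proof is correct and follows essentially the same route as the paper: you invoke the skew-product representation of $\beta^\lambda$ from the preceding theorem, write $\eta=d\phi+\alpha$ in the coordinates \eqref{invar}, and observe that the $d\phi$ contribution $\lambda B(t)-\theta(t)$ and the $\alpha$ contribution $\theta(t)$ cancel to leave $\lambda B(t)$. The only cosmetic difference is your relabeling of the fiber coordinate as $\phi$ and your explicit remarks on the chart and the Stratonovich chain rule, which are welcome but not additional ideas.
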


\begin{proof}
In the coordinates \eqref{invar}, we have
 \begin{align*}
\eta =  d\theta+\frac{i}{2(1+|w|^2)}\sum_{k=1}^n(w_kd\overline{w_k}-\overline{w_k}dw_k).
\end{align*}
Since
\[
\beta^\lambda(t)=\frac{e^{i\lambda B(t)-i\theta(t)} }{\sqrt{1+|w(t)|^2}} \left( w(t),1\right)
\]
we deduce that
\begin{align*}
 \int_{\beta^\lambda [0,t]} \eta &= \int_{\beta^\lambda [0,t]} d\theta+ \int_{\beta^\lambda [0,t]} \frac{i}{2(1+|w|^2)}\sum_{k=1}^n(w_kd\overline{w_k}-\overline{w_k}dw_k)  \\
  &=\int_0^t (\lambda dB(s) -d\theta (s))  +\frac{i}{2}\sum_{j=1}^n \int_0^t \frac{w_j(s)  d\overline{w_j}(s)-\overline{w_j}(s) dw_j(s)}{1+|w(s)|^2}\\
 &=\lambda B(t)-\theta(t)+\frac{i}{2}\sum_{j=1}^n \int_0^t \frac{w_j(s)  d\overline{w_j}(s)-\overline{w_j}(s) dw_j(s)}{1+|w(s)|^2} \\
 &=\lambda B(t).
\end{align*}
\end{proof}

We now turn to the homogenisation result.

\begin{theorem}
Let $ (\beta^\lambda(t))_{t \ge 0, \lambda >0}$ be a family of Brownian motions on the Berger spheres $(\bS^{2n+1}, g_{\lambda})$ started at the north pole. Let $f:\bS^{2n+1}\to \R$ be a bounded and Borel function. For every $t>0$, one has
\[
\lim_{\lambda \to \infty} \mathbb{E}\left( f(\beta^\lambda(t))\right)=\frac{1}{2\pi} \int_0^{2\pi} \mathbb{E}\left[f \left( \frac{e^{i\theta} }{\sqrt{1+|w(t)|^2}} \left( w(t),1\right)\right)\right] d\theta,
\]
where $w$ is a Brownian motion on $\mathbb{CP}^n$ started at 0.
\end{theorem}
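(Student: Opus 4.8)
The plan is to use the skew-product representation $\beta^\lambda(t)=\frac{e^{i\lambda B(t)-i\theta(t)}}{\sqrt{1+|w(t)|^2}}(w(t),1)$ from the previous theorem, together with the fact that the phase $\lambda B(t)$ equidistributes on $\R/2\pi\mathbb{Z}$ as $\lambda\to\infty$. First I would fix the bounded Borel function $f$ on $\bS^{2n+1}$ and write, using the independence of $B$ from $w$ and hence from $\theta$,
\[
\mathbb{E}\left(f(\beta^\lambda(t))\right)=\mathbb{E}\left[\,\mathbb{E}\left[\left. f\!\left(\frac{e^{i\lambda B(t)-i\theta(t)}}{\sqrt{1+|w(t)|^2}}(w(t),1)\right)\right| w[0,t]\right]\right],
\]
where the inner expectation is over $B(t)\sim\mathcal{N}(0,t)$ alone, with $w(t)$ and $\theta(t)$ frozen. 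So the core analytic input is: for fixed $a\in\bS^{2n+1}$ and $\phi\in\R$, if $g(u):=f(e^{iu}a)$ is a bounded Borel function on $\R/2\pi\mathbb{Z}$, then $\mathbb{E}\left[g(\lambda B(t)-\phi)\right]\to\frac{1}{2\pi}\int_0^{2\pi}g(u)\,du$ as $\lambda\to\infty$.

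The key step is this equidistribution lemma. I would prove it by Fourier analysis on the circle: expand (for continuous $g$) $g(u)=\sum_{k\in\mathbb{Z}}\hat g(k)e^{iku}$; then $\mathbb{E}\left[e^{ik(\lambda B(t)-\phi)}\right]=e^{-ik\phi}e^{-k^2\lambda^2 t/2}$, which for $k\neq 0$ tends to $0$ as $\lambda\to\infty$, while the $k=0$ term gives exactly $\hat g(0)=\frac{1}{2\pi}\int_0^{2\pi}g(u)\,du$. Dominated convergence on the (absolutely summable, for $g\in C^2$ say, or by a Fejér-type argument for general continuous $g$) Fourier series closes the continuous case; a standard approximation then extends it to bounded Borel $g$, or alternatively one invokes the Weyl equidistribution / convergence-of-wrapped-Gaussians statement directly. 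Applying this with $a=\frac{1}{\sqrt{1+|w(t)|^2}}(w(t),1)$, $\phi=\theta(t)$, and $g(u)=f(e^{iu}a)$, the inner conditional expectation converges pointwise (in $w[0,t]$) to $\frac{1}{2\pi}\int_0^{2\pi}f\!\left(\frac{e^{iu}}{\sqrt{1+|w(t)|^2}}(w(t),1)\right)du$.

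Finally I would pass the limit through the outer expectation over $w$ by dominated convergence, using the uniform bound $\|f\|_\infty$, to obtain
\[
\lim_{\lambda\to\infty}\mathbb{E}\left(f(\beta^\lambda(t))\right)=\frac{1}{2\pi}\int_0^{2\pi}\mathbb{E}\left[f\!\left(\frac{e^{i\theta}}{\sqrt{1+|w(t)|^2}}(w(t),1)\right)\right]d\theta,
\]
which is the claim (the dummy variable $u$ renamed $\theta$, and Fubini used to swap the outer expectation with the $d\theta$ integral). The main obstacle is making the equidistribution lemma clean for merely bounded Borel $f$: the Fourier argument is transparent for smooth $f$, but for general $f$ one needs an approximation step (e.g. approximate $f$ in $L^1(\bS^{2n+1})$ by continuous functions and control the error uniformly in $\lambda$ via the fact that the law of $e^{i\lambda B(t)}a$ converges in total variation — indeed with a smooth bounded density on the fiber — to the uniform measure on the circle $\{e^{iu}a\}$). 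Everything else is bookkeeping with the explicit representation and two applications of dominated convergence.
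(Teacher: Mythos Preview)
Your argument is correct and is genuinely different from the paper's. The paper does not use the skew-product representation at all in this proof; instead it computes the heat kernel of $\beta^\lambda$ explicitly. By symmetry, the density of $\beta^\lambda(t)$ started at the north pole depends only on $(r,\theta)$, and the radial generator is $L_\lambda=\frac{1}{2}\big(\partial_r^2+((2n-1)\cot r-\tan r)\partial_r+(\tan^2 r+\lambda^2)\partial_\theta^2\big)$. Its heat kernel is expanded in a double series over $k\in\mathbb{Z}$, $m\ge 0$ involving Jacobi polynomials, with each $k$-mode carrying a factor $e^{-\frac{1}{2}k^2\lambda^2 t}$; letting $\lambda\to\infty$ kills all $k\neq 0$ terms and leaves precisely the radial $\mathbb{CP}^n$ heat kernel times $\frac{1}{2\pi}$, with the convergence shown in $L^2(\mu)$.

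Your route is more probabilistic and more elementary: you freeze $w[0,t]$, reduce to the equidistribution of the wrapped Gaussian $\lambda B(t)\ \mathrm{mod}\ 2\pi$ (which, as you note, converges to the uniform law in total variation for $t>0$ via Poisson summation or the Fourier computation $\mathbb{E}[e^{ik\lambda B(t)}]=e^{-k^2\lambda^2 t/2}$), and then apply dominated convergence twice. This avoids the heavy spectral machinery and handles bounded Borel $f$ directly once total-variation convergence on each fiber is invoked. The paper's approach, by contrast, yields more: an explicit formula for $q_t^\lambda(r,\theta)$ and $L^2$ convergence of the densities themselves, not just of the expectations. So your method is cleaner for the stated result, while the paper's gives quantitative information (e.g.\ an exponential rate $e^{-\lambda^2 t/2}$ in each nonzero mode) and the limiting heat kernel in closed form.
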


\begin{proof}
We already know that,  in the coordinates \eqref{invar},  the generator of $\beta^\lambda$ is
\[
2(1+|w|^2)\sum_{k=1}^n \frac{\partial^2}{\partial w_k \partial\overline{w_k}}+ 2(1+|w|^2)\mathcal{R} \overline{\mathcal{R}}+\frac{1}{2}(\lambda^2+|w|^2)\ \frac{\partial^2}{\partial \theta^2}-i(|w|^2+1)(\mathcal{R} -\overline{\mathcal{R}})\frac{\partial}{\partial\theta}.
\]
By symmetry, the heat kernel of $\beta^\lambda$ only depends of the variables $r=\arctan |w|$ and $\theta$. The radial part of the generator of $\beta^\lambda$ is then
\[
L_\lambda=\frac{1}{2}\left(\frac{\partial^2}{\partial r^2}+((2n-1)\cot r-\tan r)\frac{\partial}{\partial r}+(\tan^2r+\lambda^2)\frac{\partial^2}{\partial \theta^2}\right).
\]
The heat kernel $q_t^\lambda(r,\theta)$ of the diffusion with generator $L_\lambda$ can be computed as in \cite{BW1} and one obtains
\[
q_t^\lambda(r,\theta)=\frac{\Gamma(n)}{2\pi^{n+1}}\sum_{k=-\infty}^{+\infty}\sum_{m=0}^{+\infty} (2m+|k|+n){m+|k|+n-1\choose n-1}e^{-\frac{1}{2}(\lambda_{m,k}+k^2\lambda^2)t+ik \theta}(\cos r)^{|k|}P_m^{n-1,|k|}(\cos 2r),
\]
where $\lambda_{m,k}=4m(m+|k|+n)+2|k|n$ and $P_m^{n-1,|k|}$ are  Jacobi polynomials. Since 
\begin{align*}
 &\bigg|q_t^\lambda(r,\theta)-\frac{\Gamma(n)}{2\pi^{n+1}}\sum_{m=0}^{+\infty} (2m+n){m+n-1\choose n-1}e^{-\frac{1}{2}\lambda_{m,0}t}P_m^{n-1,|0|}(\cos 2r)\bigg|\\
&\le\frac{\Gamma(n)}{2\pi^{n+1}}\sum_{k\not=0}\sum_{m=0}^{+\infty} e^{-\frac{1}{2}k^2\lambda^2 t}\bigg|(2m+|k|+n){m+|k|+n-1\choose n-1}e^{-\frac{1}{2}\lambda_{m,k}t}(\cos r)^{|k|}P_m^{n-1,|k|}(\cos 2r)\bigg|,
\end{align*}
one easily deduces that in $L^2(\mu)$, for $t>0$,
\[
\lim_{\lambda \to +\infty} q_t^\lambda(r,\theta)=q_t^\infty(r),
\]
where
\[
q_t^\infty(r)=\frac{\Gamma(n)}{2\pi^{n+1}}\sum_{m=0}^{+\infty} (2m+n){m+n-1\choose n-1}e^{-2m(m+n)t}P_m^{n-1,0}(\cos 2r)
\]
and
\[
d\mu=\frac{2\pi^n}{\Gamma(n)}(\sin r)^{2n-1}\cos r drd\theta,
\]
is the invariant and symmetric measure of $L_\lambda$. We now conclude by observing that $q_t^\infty(r)$ is the heat kernel at 0 of the radial part of the Brownian motion on $\mathbb{CP}^n$.
\end{proof}

\section{Generalized stochastic areas in $\mathbb{CH}^n$}

We now turn to the second part of the paper.

\subsection{Stochastic area and anti-de Sitter fibration}

As a set, the  complex hyperbolic space $\mathbb{CH}^n$ can be defined as the open unit ball in $\mathbb{C}^n$. Its Riemannian structure can be constructed as follows. Let 
\[
\bH^{2n+1}=\{ z \in \mathbb{C}^{n+1}, | z_1|^2+\cdots+|z_n|^2 -|z_{n+1}|^2=-1 \}
\]
be the $2n+1$ dimensional anti-de Sitter space. We endow $\bH^{2n+1}$ with its standard Lorentz metric with signature $(2n,1)$. The Riemannian structure on $\mathbb{CH}^n$ is then such that the map
\begin{align*}
\begin{array}{llll}
\pi :& \bH^{2n+1} & \to & \mathbb{CH}^n \\
  & (z_1,\cdots,z_{n+1}) & \to & \left( \frac{z_1}{z_{n+1}}, \cdots, \frac{z_n}{z_{n+1}}\right)
\end{array}
\end{align*}
is an indefinite Riemannian submersion whose one-dimensional fibers are definite negative. This submersion is associated with a fibration. Indeed, the group $\mathbf{U}(1)$ acts isometrically on $\bH^{2n+1}$, and the quotient space of $\bH^{2n+1}$ by  this action is isometric to $\mathbb{CH}^n$.  The fibration
\[
\mathbf{U}(1)\to\bH^{2n+1}\to\mathbb{CH}^n
\]
 is called the anti-de Sitter fibration. 
 
 \
 
 To parametrize $\mathbb{CH}^n$, we will use the global inhomogeneous coordinates given by $w_j=z_j/z_{n+1}$ where $(z_1,\dots, z_n)\in \M$ with  $\M=\{z\in \mathbb{C}^{n,1}, \sum_{k=1}^n|z_{k}|^2-|z_{n+1}|^2<0 \}$. Let $\alpha$ be the one-form on $\mathbb{CH}^n$ which is defined in inhomogeneous coordinates by
\[
\alpha=\frac{i}{2(1-|w|^2)}\sum_{j=1}^n(w_jd\overline{w_j}-\overline{w_j}dw_j),
\]
where $|w|^2=\sum_{j=1}^n|w_j|^2<1$.  A simple computation yields
\[
d\alpha=\frac{i}{(1-|w|^2)^2}\left((1-|w|^2)\sum_{j=1}^ndw_j\wedge d\overline{w}_j-\sum_{j,k=1}^n\overline{w}_jw_k dw_j\wedge d\overline{w}_k \right).
\]
Thus $d\alpha$ is exactly the K\"ahler form which induces the standard Bergman metric on $\CH$. We can then naturally define the stochastic area process on $\CH$ as follows:
\begin{definition}
Let $(w(t))_{t \ge 0}$ be a Brownian motion on $\CH$ started at $0$ (We call $0$ the point with inhomogeneous coordinates $w_1=0,\cdots, w_{n}=0$). The generalized stochastic area process of $(w(t))_{t \ge 0}$ is defined by
\[
\theta(t)=\int_{w[0,t]} \alpha=\frac{i}{2}\sum_{j=1}^n \int_0^t \frac{w_j(s)  d\overline{w_j}(s)-\overline{w_j}(s) dw_j(s)}{1-|w(s)|^2},
\]
where the above stochastic integrals are understood in the Stratonovitch sense or equivalently It\^o sense.
\end{definition}

As in in the Heisenberg group case or the Hopf fibration case, the stochastic area process is intimately related to a diffusion on the total space of the fibration.

\begin{theorem}\label{FGHT}
 Let $(w(t))_{t \ge 0}$ be a Brownian motion on $\mathbb{CH}^n$ started at 0 and $(\theta(t))_{t\ge 0}$ be its stochastic area process. The $\bH^{2n+1}$-valued diffusion process
 \[
 Y_t=\frac{e^{i\theta_t} }{\sqrt{1-|w(t)|^2}} \left( w(t),1\right), \quad t \ge 0
 \]
 is the horizontal lift at $(0,1)$ of $(w(t))_{t \ge 0}$ by the submersion $\pi$.
\end{theorem}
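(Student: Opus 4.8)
The plan is to mirror exactly the proof of Theorem \ref{horizon}, replacing the Hopf fibration $\mathbf{U}(1)\to\bS^{2n+1}\to\mathbb{CP}^n$ by the anti-de Sitter fibration $\mathbf{U}(1)\to\bH^{2n+1}\to\mathbb{CH}^n$ and adjusting signs to account for the indefinite signature. First I would set up coordinates on $\bH^{2n+1}$ adapted to the $\mathbf{U}(1)$-action analogous to \eqref{invar}: the map $(w,\theta)\mapsto \frac{e^{i\theta}}{\sqrt{1-|w|^2}}(w,1)$, with $w\in\mathbb{CH}^n$ (so $|w|^2<1$) and $\theta\in\R/2\pi\mathbb{Z}$ the fiber coordinate, parametrizes $\bH^{2n+1}$ since one checks directly that $\sum_{k=1}^n|z_k|^2-|z_{n+1}|^2 = \frac{|w|^2-1}{1-|w|^2}=-1$. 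The key geometric input, just as before, is that the horizontal distribution of $\pi$ is the kernel of the natural contact form $\eta$ on $\bH^{2n+1}$ and that the fibers are the orbits of the corresponding Reeb field; I would record $\eta$ (obtained from the ambient Lorentz structure, $\eta=-\frac{i}{2}\sum_{j=1}^{n+1}\varepsilon_j(\overline{z_j}dz_j-z_jd\overline{z_j})$ with the last sign flipped) and compute its expression in the $(w,\theta)$ coordinates, which should come out to $\eta = d\theta + \frac{i}{2(1-|w|^2)}\sum_{k=1}^n(w_kd\overline{w_k}-\overline{w_k}dw_k) = d\theta+\alpha$.

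From the formula $\eta=d\theta+\alpha$ it follows immediately that the horizontal lift to $\bH^{2n+1}$ of the coordinate vector field $\frac{\partial}{\partial w_i}$ on $\mathbb{CH}^n$ is $\frac{\partial}{\partial w_i}-\alpha\!\left(\frac{\partial}{\partial w_i}\right)\frac{\partial}{\partial\theta}$, exactly as in the $\mathbb{CP}^n$ case, because the horizontal lift of a vector $v$ is the unique lift annihilated by $\eta$ and projecting to $v$. Then, given a smooth curve $\gamma(t)=(\gamma_1(t),\dots,\gamma_n(t))$ in $\mathbb{CH}^n$ started at $0$, its horizontal lift $\bar\gamma$ at $(0,1)$ satisfies $\bar\gamma'(t)=\sum_i\gamma_i'(t)\bigl(\frac{\partial}{\partial w_i}-\alpha(\frac{\partial}{\partial w_i})\frac{\partial}{\partial\theta}\bigr)$, which integrates to $\bar\gamma(t)=\frac{e^{i\theta(t)}}{\sqrt{1-|\gamma(t)|^2}}(\gamma(t),1)$ with $\theta(t)=\int_{\gamma[0,t]}\alpha$; note the sign of the exponent is now $+$ rather than $-$ because $d\alpha$ is the Kähler form itself here (no intervening sign), matching the statement of the theorem. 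Applying the same computation pathwise to the Brownian motion $(w(t))_{t\ge0}$ — legitimate because the Stratonovitch stochastic calculus obeys the ordinary chain rule, so horizontal lifts of semimartingales are governed by the same vector-field identities — gives $Y_t=\frac{e^{i\theta_t}}{\sqrt{1-|w(t)|^2}}(w(t),1)$ with $\theta_t=\int_{w[0,t]}\alpha$, which is precisely the claimed stochastic area process.

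The main obstacle, and the only place where the Lorentzian signature genuinely intervenes, is verifying that $\pi:\bH^{2n+1}\to\mathbb{CH}^n$ is an indefinite Riemannian submersion whose fibers are negative-definite and that the contact form $\eta$ really is the one-form whose kernel is the horizontal distribution in the Lorentzian sense — i.e.\ that the horizontal/vertical splitting with respect to the indefinite metric still makes $\frac{\partial}{\partial w_i}-\alpha(\frac{\partial}{\partial w_i})\frac{\partial}{\partial\theta}$ the correct lift. I would handle this by a direct computation of the pullback of the ambient Lorentz metric under the map $(w,\theta)\mapsto\frac{e^{i\theta}}{\sqrt{1-|w|^2}}(w,1)$, showing it splits as $g_{\mathbb{CH}^n}\oplus(-d\theta^2)$ relative to the connection form $\eta=d\theta+\alpha$ (the $-$ reflecting the timelike fiber), so that the horizontal complement of $\partial/\partial\theta$ is indeed $\ker\eta$; everything else is then a formal repetition of the $\mathbb{CP}^n$ argument with $1+|w|^2$ replaced by $1-|w|^2$. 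I would also remark that, since the fibers are compact ($\mathbf{U}(1)$) and the submersion is onto, no transience or explosion issue arises in the lifting, so the pathwise identity extends to all $t\ge0$ without qualification.
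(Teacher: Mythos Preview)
Your approach mirrors the paper's exactly: express the contact form $\eta$ in the $(w,\theta)$ coordinates on $\bH^{2n+1}$, read off the horizontal lift of $\partial/\partial w_i$, and integrate pathwise using the Stratonovitch chain rule. The extra care you propose about the Lorentzian splitting $g_{\mathbb{CH}^n}\oplus(-d\theta^2)$ is more than the paper actually writes down, but it is harmless and clarifies why $\ker\eta$ is the right horizontal distribution despite the indefinite signature.

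The one genuine error is the sign in your coordinate expression for $\eta$. Carrying out the substitution $z=\frac{e^{i\theta}}{\sqrt{1-|w|^2}}(w,1)$ in
\[
\eta=-\frac{i}{2}\Bigl(\sum_{j=1}^n(\overline{z_j}dz_j-z_jd\overline{z_j})-(\overline{z_{n+1}}dz_{n+1}-z_{n+1}d\overline{z_{n+1}})\Bigr)
\]
gives $\eta=-d\theta+\alpha$, not $\eta=d\theta+\alpha$ as you write (the $d\theta$ term picks up the factor $\rho^2(|w|^2-1)=-1$). This is not cosmetic: with your version $\eta=d\theta+\alpha$, the horizontal lift $\partial/\partial w_i-\alpha(\partial/\partial w_i)\partial/\partial\theta$ that you correctly derive from it would integrate to fiber coordinate $-\!\int_{\gamma}\alpha$, hence to $e^{-i\theta(t)}$, contradicting the $e^{+i\theta(t)}$ in the statement. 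Your justification for the flip (``because $d\alpha$ is the K\"ahler form itself here'') is not an argument --- the exponent sign is governed solely by the sign of $d\theta$ in $\eta$, not by any property of $d\alpha$. Once you correct to $\eta=-d\theta+\alpha$, the horizontal lift becomes $\partial/\partial w_i+\alpha(\partial/\partial w_i)\partial/\partial\theta$, the fiber coordinate along a horizontal curve is $+\!\int\alpha$, and the theorem follows exactly as you outline.
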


\begin{proof}
Again, the key-point is to observe the compatibility of the submersion $\pi$ with the contact structure of $\bH^{2n+1}$. Namely, the horizontal distribution of this submersion is the kernel of the standard contact form on $\bH^{2n+1}$ 
\[
\eta=-\frac{i}{2}\left(\sum_{j=1}^{n}(\overline{z_j}dz_j-z_j d\overline{z_j})-(\overline{z_{n+1}}dz_{n+1}-z_{n+1} d\overline{z_{n+1}})\right).
\]

\

Let $(w_1,\cdots, w_n)$ be  the  coordinates for $\bH^{2n+1}$ given by $w_j=z_j/z_{n+1}$, and $\theta$ be the local fiber coordinate. These coordinates are given by the map
\begin{align*}
(w,\theta)\longrightarrow \frac{e^{i\theta} }{\sqrt{1-|w|^2}} \left( w,1\right),
\end{align*}
where  $\theta \in \R/2\pi\mathbb{Z}$, and $w \in \mathbb{CH}^n$. In these coordinates, we compute that
 \begin{align*}
\eta=-d\theta+\frac{i}{2(1-|w|^2)}\sum_{j=1}^n(w_jd\overline{w_j}-\overline{w_j}dw_j).
\end{align*}
As a consequence, the horizontal lift to $\bH^{2n+1}$  of the vector field $\frac{\partial}{\partial w_i}$ is given by $\frac{\partial}{\partial w_i}-\alpha\left(\frac{\partial}{\partial w_i} \right)\frac{\partial}{\partial \theta}$. Therefore, the lift of $(w(t))_{t \ge 0}$ is $\frac{e^{i\theta(t)} }{\sqrt{1-|w(t)|^2}} \left( w(t),1\right)$ with
\[
\theta(t) =\sum_{i=1}^n \int_0^t \alpha\left(\frac{\partial}{\partial w_i} \right)  dw_i =\int_{w[0,t]} \alpha.
\]
\end{proof}

Now we are in position to study the distribution of the process $(\theta_t)_{t \ge 0}$. First, we prove the following theorem:

\begin{theorem}\label{diff-H}
Let $r(t)=\arctanh |w(t)|$. The process $\left( r(t), \theta(t)\right)_{t \ge 0}$ is a diffusion with generator
 \[
L=\frac{1}{2} \left( \frac{\partial^2}{\partial r^2}+((2n-1)\coth r+\tanh r)\frac{\partial}{\partial r}+\tanh^2r\frac{\partial^2}{\partial \theta^2}\right).
 \]
 As a consequence the following equality in distribution holds
 \begin{equation}\label{eq-r-theta-H}
\left( r(t) ,\theta(t) \right)_{t \ge 0}=\left( r(t),B_{\int_0^t \tanh^2 r(s)ds} \right)_{t \ge 0},
\end{equation}
where $(B_t)_{t \ge 0}$ is a standard Brownian motion independent from $r$.
\end{theorem}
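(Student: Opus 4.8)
The plan is to mirror exactly the computation carried out in the proof of Theorem \ref{diff1} for $\mathbb{CP}^n$, replacing the Fubini-Study data by the Bergman data on $\mathbb{CH}^n$. First I would record the Laplace-Beltrami operator for the Bergman metric in the inhomogeneous coordinates $w_j = z_j/z_{n+1}$; by the same normalization as in \eqref{eq-Laplacian-CPn} it takes the form
\[
\Delta_{\mathbb{CH}^n} = 4(1-|w|^2)\sum_{k=1}^n \frac{\partial^2}{\partial w_k \partial\overline{w_k}} - 4(1-|w|^2)\mathcal{R}\overline{\mathcal{R}},
\]
with $\mathcal{R}=\sum_{j=1}^n w_j \partial/\partial w_j$ (the sign changes and the factor $1-|w|^2$ being the only differences from the compact case). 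Since $Y_t$ is the horizontal lift of $w$ by the submersion $\pi$ of Theorem \ref{FGHT}, the generator of $Y$ is $\tfrac12\bar\Delta_{\mathbb{CH}^n}$, the horizontal lift of $\Delta_{\mathbb{CH}^n}$, obtained by substituting for $\partial/\partial w_i$ its horizontal lift $\partial/\partial w_i - \alpha(\partial/\partial w_i)\,\partial/\partial\theta = \partial/\partial w_i - \tfrac{i}{2}\tfrac{\overline{w_i}}{1-\rho^2}\partial/\partial\theta$, exactly as in Theorem \ref{diff1}.

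Second I would perform this substitution and collect terms: the cross terms produce a first-order term in $\partial/\partial\theta$ proportional to $(\mathcal{R}-\overline{\mathcal{R}})$ and a pure $\partial^2/\partial\theta^2$ term with coefficient $|w|^2$, so that
\[
\bar\Delta_{\mathbb{CH}^n} = 4(1-|w|^2)\sum_{k=1}^n \frac{\partial^2}{\partial w_k\partial\overline{w_k}} - 4(1-|w|^2)\mathcal{R}\overline{\mathcal{R}} + |w|^2\frac{\partial^2}{\partial\theta^2} + 2i(1-|w|^2)(\mathcal{R}-\overline{\mathcal{R}})\frac{\partial}{\partial\theta}.
\]
Then I would apply $\bar\Delta_{\mathbb{CH}^n}$ to a function depending only on $\rho=|w|$ and $\theta$: using $\mathcal{R}\overline{\mathcal{R}}$ and $\sum_k\partial^2/\partial w_k\partial\overline{w_k}$ acting on radial functions (the same elementary identities as in the $\mathbb{CP}^n$ computation, with the $+$ replaced by $-$ in the appropriate places) one gets a radial operator of the shape
\[
(1-\rho^2)^2\frac{\partial^2}{\partial\rho^2} + \left(\frac{(2n-1)(1-\rho^2)}{\rho} + (1-\rho^2)\rho\right)\frac{\partial}{\partial\rho} + \rho^2\frac{\partial^2}{\partial\theta^2}.
\]
Finally the change of variable $\rho = \tanh r$ (so $r = \arctanh|w|$), together with $1-\rho^2 = \operatorname{sech}^2 r$, $d\rho = \operatorname{sech}^2 r\, dr$, turns this into $2L$ with $L$ as stated; I would check in particular that the first-order coefficient becomes $(2n-1)\coth r + \tanh r$ (note the $+\tanh r$, in contrast to the $-\tan r$ of the compact case, which is the signature of negative curvature) and that the $\partial^2/\partial\theta^2$ coefficient becomes $\tanh^2 r$.

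Once the generator is identified, the equality in distribution \eqref{eq-r-theta-H} is immediate and requires no new work: $L$ has no first-order term in $\theta$ and no $\partial^2/\partial r\partial\theta$ term, so conditionally on the path of $r$ the process $\theta$ is a continuous local martingale with deterministic bracket $\int_0^t \tanh^2 r(s)\,ds$, hence a time-changed Brownian motion $B_{\int_0^t \tanh^2 r(s)\,ds}$ with $B$ independent of $r$, by the Dambis--Dubins--Schwarz theorem exactly as in Theorem \ref{diff1}. The only genuinely computational step is the bookkeeping in the second paragraph — tracking the several minus signs coming from the indefinite metric on $\bH^{2n+1}$ and the hyperbolic (as opposed to spherical) geometry — and verifying that the vertical contributions assemble correctly; this is the main, though routine, obstacle. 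A minor point worth a remark is that the Lorentzian signature of $\bH^{2n+1}$ forces the sign of $e^{i\theta_t}$ in $Y_t$ (as opposed to $e^{-i\theta_t}$ in Theorem \ref{horizon}), but this does not affect the radial generator.
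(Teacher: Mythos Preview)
Your proposal is exactly the paper's argument: write the Bergman Laplacian in inhomogeneous coordinates, lift it horizontally using the contact form on $\bH^{2n+1}$, restrict to functions of $(\rho,\theta)$ with $\rho=|w|$, then substitute $\rho=\tanh r$; the skew-product representation then follows as you say. One concrete slip to fix: in your radial $(\rho,\theta)$ operator the first-order coefficient should be
\[
\frac{(2n-1)(1-\rho^2)}{\rho} \;-\; (1-\rho^2)\rho,
\]
with a minus sign (this is what the paper obtains, and it is what the term $-4(1-|w|^2)\mathcal{R}\overline{\mathcal{R}}$ contributes on radial functions). Under $\rho=\tanh r$ the second-order piece $(1-\rho^2)^2\partial_\rho^2$ already produces $+2\tanh r\,\partial_r$, so only the combination $2\tanh r - \tanh r$ gives the claimed drift $(2n-1)\coth r+\tanh r$; with your $+$ sign you would land on $(2n-1)\coth r+3\tanh r$ instead. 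Apart from this bookkeeping correction the outline is correct and coincides with the paper's proof.
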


\begin{proof}
Similarly as in the case of $\mathbb{CP}^n$, we first compute the generator of the diffusion $Y$ introduced in the previous theorem. The Laplace-Beltrami operator for the Bergman metric of $\mathbb{CH}^n$ is given by
\[
\Delta_{\mathbb{CH}^n}=4(1-|w|^2)\sum_{k=1}^n \frac{\partial^2}{\partial w_k \partial\overline{w_k}}+ 4(1-|w|^2)\mathcal{R} \overline{\mathcal{R}}
\]
where
\[
\mathcal{R}=\sum_{j=1}^n w_j \frac{\partial}{\partial w_j}.
\] 
Thus, the generator of $Y$ is $\frac{1}{2}\bar{\Delta}_{\mathbb{CH}^n}$ where $\bar{\Delta}_{\mathbb{CH}^n}$ is the horizontal lift to  $\bH^{2n+1}$ of $\Delta_{\mathbb{CH}^n}$. We now observe that the horizontal lift of the vector field $\frac{\partial}{\partial w_i}$ to $\bH^{2n+1}$   is given by 
\[
\frac{\partial}{\partial w_i}-\alpha\left(\frac{\partial}{\partial w_i} \right)\frac{\partial}{\partial \theta}=\frac{\partial}{\partial w_j}+\frac{i}{2}\frac{\overline{w_j}}{1-\rho^2}\frac{\partial}{\partial\theta}.
\]
Substituting $\frac{\partial}{\partial w_i}$ by its lift in the expression of $\Delta_{\mathbb{CH}^n}$ yields
\[
\bar{\Delta}_{\mathbb{CH}^n}=4(1-|w|^2)\sum_{k=1}^n \frac{\partial^2}{\partial w_k \partial\overline{w_k}}- 4(1-|w|^2)\mathcal{R} \overline{\mathcal{R}}+|w|^2\ \frac{\partial^2}{\partial \theta^2}+2i(1-|w|^2)(\mathcal{R} -\overline{\mathcal{R}})\frac{\partial}{\partial\theta}.
\]
With $\rho=|w|$, we compute then that $\bar{\Delta}_{\mathbb{CH}^n}$ acts on functions depending only on $(\rho, \theta)$ as
\[
\left(1-\rho^2\right)^2\frac{\partial^2}{\partial \rho^2}+\left(\frac{(2n-1)(1-\rho^2)}{\rho}-(1-\rho^2)\rho\right)\frac{\partial}{\partial \rho}+\rho^2\frac{\partial^2 }{\partial \theta^2}.
\]
The change of variable $\rho =\tanh r$ finishes the proof.
\end{proof}

The heat kernel of $L$ has been computed and studied by Bonnefont for $n=1$ in \cite{Bon} and by J. Wang for $n\ge 2$ in \cite{JW}. As a consequence, we deduce:
\begin{itemize}
\item $\mathbf{n=1}$. For $t>0, r\ge 0, \theta \in \R$,
\[
\mathbb{P} \left( r(t) \in dr, \theta(t) \in d\theta\right)=\pi p_t(r,\theta)  \sinh 2r dr d\theta,
\]
where
\begin{align}\label{heatCH1}
p_t(r,\theta)=\frac{e^{-t/2}}{(2\pi t)^2} \int_{-\infty}^{+\infty} e^{-\frac{ \arcosh^2(\cosh r \cosh y)-(y-i\theta)^2}{2t } } \frac{\arcosh(\cosh r \cosh y) }{\sqrt{\cosh^2 r \cosh^2 y-1 } }dy.
\end{align}
\item $\mathbf{n \ge 2}$. For $t>0, r\ge 0, \theta \in \R$, 
\[
\mathbb{P} \left( r(t) \in dr, \theta(t) \in d\theta\right)=\frac{2\pi^n}{\Gamma(n)}p_t(r,\theta)(\sinh r)^{2n-1}\cosh r drd\theta,
\]
where
\begin{equation}\label{heatCHn}
p_t(r, \theta)=2\frac{\Gamma(n+1)e^{-n^2\frac{t}{2}+\frac{\pi^2}{2t}}}{(2\pi)^{n+2} t}\int_{-\infty}^{+\infty}\int_0^{+\infty}\frac{e^{\frac{(y-i\theta)^2-u^2}{2t}}\sinh u\sin\left(\frac{\pi u}{t}\right)}{\left(\cosh u+\cosh r\cosh y\right)^{n+1}}du
dy.
\end{equation}

\end{itemize}

These expressions of the heat kernel were derived using complex analysis and methods in partial differential equations.  In principle, the distribution of the couple $(r(t),\theta(t))$ is therefore completely known from an analytic point of view. However, these expressions are obviously not easy to work with. Actually when $n \ge 2$, it is not even apparent from the formula that $p_t(r,\theta) \ge 0$ ! In the next section, we propose a purely probabilistic methods that parallels the case of $\mathbb{CP}^n$ to compute the characteristic function of $\theta(t)$. The characteristic functions we obtain are easier to analyze.

\

Though, in general,  the formulas \eqref{heatCH1} and \eqref{heatCHn} are not easy to handle, in the case $n=1$, there is a very nice cancellation in the formula of $p_t(0,\theta)$ from which one can deduce the distribution of the stochastic area of the Brownian loop on $\mathbb{CH}^1$.

\begin{theorem}\label{HJKLP}
Assume $n=1$. For $ t>0$, and $\theta \in \R$,
\[
\mathbb{P} \left( \theta(t) \in d\theta | r(t)=0 \right)=\frac{1}{C(t)} \frac{e^{-\frac{\theta^2}{2t}}}{\cosh^2 \left( \frac{\pi \theta}{t}\right)} d\theta,
\]
where $C(t)$ is the normalization constant.
\end{theorem}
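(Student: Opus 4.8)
The plan is to start from the explicit heat kernel formula \eqref{heatCH1} for $n=1$ and evaluate it at $r=0$, where a dramatic simplification occurs. Setting $r=0$ in \eqref{heatCH1}, we have $\cosh r \cosh y = \cosh y$, so $\arcosh(\cosh r\cosh y) = |y|$ and $\cosh^2 r\cosh^2 y - 1 = \sinh^2 y$. Hence the integrand collapses: $\frac{\arcosh(\cosh y)}{\sqrt{\cosh^2 y - 1}} = \frac{|y|}{|\sinh y|}$, and the exponent becomes $-\frac{y^2 - (y-i\theta)^2}{2t} = -\frac{2iy\theta + \theta^2}{2t}$. So, up to the overall prefactor $\frac{e^{-t/2}}{(2\pi t)^2}$, one is left with $e^{-\theta^2/(2t)}\int_{-\infty}^{+\infty} e^{-iy\theta/t}\,\frac{|y|}{\sinh|y|}\,dy$, i.e. a Gaussian factor times the Fourier transform of $y\mapsto \frac{y}{\sinh y}$ (which is even, so the transform is real).

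The key step is then the classical Fourier transform identity
\[
\int_{-\infty}^{+\infty} \frac{y}{\sinh y}\, e^{-i \xi y}\, dy = \frac{\pi^2}{\cosh^2\!\left(\frac{\pi \xi}{2}\right)},
\]
which follows, e.g., from the known formula $\int_0^\infty \frac{\cos(a x)}{\cosh^2 x}\,dx$ after an integration by parts, or directly by residues applied to $\frac{z}{\sinh z}e^{-i\xi z}$ on a rectangular contour of height $\pi$ (the poles are the double pole structure coming from $\sinh$). Applying this with $\xi = \theta/t$ gives $\int_{-\infty}^{+\infty} e^{-iy\theta/t}\,\frac{y}{\sinh y}\,dy = \frac{\pi^2}{\cosh^2(\pi\theta/(2t))}$.

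Combining, $p_t(0,\theta)$ is proportional (with a $\theta$-independent constant depending on $t$) to $\frac{e^{-\theta^2/(2t)}}{\cosh^2(\pi\theta/(2t))}$. Finally one passes from the joint density to the conditional law: by Theorem \ref{diff-H} (or directly from the formula $\mathbb{P}(r(t)\in dr,\theta(t)\in d\theta) = \pi p_t(r,\theta)\sinh 2r\, dr\, d\theta$), the conditional density of $\theta(t)$ given $r(t)=0$ is obtained by normalizing $p_t(0,\theta)$ in $\theta$, absorbing every $\theta$-independent factor into $C(t)$. This yields the claimed expression; note the stated $\cosh^2(\pi\theta/t)$ versus the $\cosh^2(\pi\theta/(2t))$ my contour computation produces — I would double-check the normalization of $\arcosh$ and the factor of $2$ in the contour height against the precise statement, since that is the only place a discrepancy can creep in. The main obstacle is purely this bookkeeping of constants (and confirming the Fourier identity in the exact normalization the paper uses); the conceptual content is just "evaluate at $r=0$, recognize a Fourier transform."
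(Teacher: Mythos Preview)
Your approach is exactly the paper's: both compute $p_t(0,\theta)$ explicitly and then normalize in $\theta$; the paper simply quotes Bonnefont's Proposition~3.5 for the closed form of $p_t(0,\theta)$, while you derive it yourself from the integral representation \eqref{heatCH1} via the Fourier identity $\int_{\mathbb R}\frac{y}{\sinh y}\,e^{-i\xi y}\,dy=\frac{\pi^2}{2\cosh^2(\pi\xi/2)}$. The discrepancy you flagged is real and not your error: \eqref{heatCH1} as printed indeed produces $\cosh^2(\pi\theta/(2t))$ rather than the $\cosh^2(\pi\theta/t)$ appearing in the theorem, so one of the two formulas transcribed from \cite{Bon} carries a factor-of-two typo---your reasoning itself is sound.
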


\begin{proof}
From Proposition 3.5 in \cite{Bon}, we have
\[
p_t(0,\theta)=\frac{e^{-t/2}}{2t^2}\frac{e^{-\frac{\theta^2}{2t}}}{\cosh^2 \left( \frac{\pi \theta}{t}\right)}.
\]
The result follows immediately.
\end{proof}

\subsection{Characteristic function of the stochastic area and limit theorem}

In this section we study the characteristic function of the stochastic area $\theta(t)$.
Let
\[
\mathcal{L}^{\alpha,\beta}=\frac{1}{2} \frac{\partial^2}{\partial r^2}+\left(\left(\alpha+\frac{1}{2}\right)\coth r+\left(\beta+\frac{1}{2}\right) \tanh r\right)\frac{\partial}{\partial r}, \quad \alpha,\beta >-1
\]
be the hyperbolic Jacobi generator. We will denote by $q_t^{\alpha,\beta}(r_0,r)$ the transition density with respect to the Lebesgue measure of the diffusion with generator $\mathcal{L}^{\alpha,\beta}$.

Let $\lambda \ge 0$, $r \in [0,+\infty)$ and 
\[
I(\lambda,r)=\mathbb{E}\left(e^{i \lambda \theta(t)}\mid r(t)=r\right).
\]
From Theorem \ref{diff-H},  we have
\begin{align*}
I(\lambda,r)& =\mathbb{E}\left(e^{i \lambda B_{\int_0^t \tanh^2 r(s)ds}}\mid r(t)=r\right) \\
 &=\mathbb{E}\left(e^{- \frac{\lambda^2}{2} \int_0^t \tanh^2 r(s)ds}\mid r(t)=r\right) 
\end{align*}
and $r$ is a diffusion with the  hyperbolic Jacobi generator
\[
\mathcal{L}^{n-1,0}=\frac{1}{2} \left( \frac{\partial^2}{\partial r^2}+((2n-1)\coth r+\tanh r)\frac{\partial}{\partial r}\right)
\]
started at $0$. 

\begin{theorem}\label{FTCHn}
For $\lambda \ge 0$, $r \in [0,+\infty)$, and $t >0$
\[
\mathbb{E}\left(e^{i \lambda \theta(t)}\mid r(t)=r\right)=\mathbb{E}\left(e^{- \frac{\lambda^2}{2} \int_0^t \tanh^2 r(s)ds}\mid r(t)=r\right) =\frac{e^{n \lambda t}}{(\cosh r)^\lambda} \frac{q_t^{n-1,\lambda}(0,r)}{q_t^{n-1,0}(0,r)}
\]
\end{theorem}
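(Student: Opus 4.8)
This proof will mirror exactly the strategy used for Theorem \ref{FThj} in the $\mathbb{CP}^n$ case, replacing trigonometric functions by their hyperbolic counterparts and tracking the sign changes that arise. The plan is to perform a Girsanov change of measure on the path space of the radial diffusion $r$ that removes the $-\frac{\lambda^2}{2}\int_0^t \tanh^2 r(s)\,ds$ term at the cost of modifying the drift of $r$ from the $\mathcal{L}^{n-1,0}$ generator to the $\mathcal{L}^{n-1,\lambda}$ generator, and then to identify the Radon--Nikodym density explicitly in terms of $(\cosh r(t))^\lambda$ via It\^o's formula.

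First I would write the SDE for $r$ under $\mathbb{P}$, namely
\[
dr(t)=\frac{1}{2}\left((2n-1)\coth r(t)+\tanh r(t)\right)dt+d\gamma(t),
\]
where $\gamma$ is a standard Brownian motion, using Theorem \ref{diff-H} (the radial part of $L$). Next I would introduce the candidate exponential local martingale
\[
D_t=\exp\left(\lambda\int_0^t \tanh r(s)\,d\gamma(s)-\frac{\lambda^2}{2}\int_0^t\tanh^2 r(s)\,ds\right),
\]
and rewrite the stochastic integral $\int_0^t \tanh r(s)\,d\gamma(s)$ in terms of $\int_0^t \tanh r(s)\,dr(s)$ using the SDE above; this produces a $+\frac{\lambda(2n-1)}{2}t$ term plus a $\tanh^2$-integral correction. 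Then apply It\^o's formula to $\ln\cosh r(t)$: one gets
\[
\ln\cosh r(t)=\int_0^t \tanh r(s)\,dr(s)+\frac{1}{2}\int_0^t \frac{ds}{\cosh^2 r(s)}=\int_0^t \tanh r(s)\,dr(s)-\frac{1}{2}\int_0^t\tanh^2 r(s)\,ds+\frac{1}{2}t,
\]
using $1/\cosh^2 = 1-\tanh^2$. Substituting this into the expression for $D_t$ should collapse everything to
\[
D_t=e^{-n\lambda t}(\cosh r(t))^\lambda e^{\frac{\lambda^2}{2}\int_0^t \tanh^2 r(s)\,ds}\cdot e^{\lambda^2\int_0^t\tanh^2 r(s)\,ds}
\]
— I need to be careful with the bookkeeping here, but the target identity is $D_t=e^{-n\lambda t}(\cosh r(t))^\lambda e^{\frac{\lambda^2}{2}\int_0^t \tanh^2 r(s)\,ds}$, equivalently
\[
e^{-n\lambda t}(\cosh r(t))^{-\lambda} D_t=e^{-\frac{\lambda^2}{2}\int_0^t\tanh^2 r(s)\,ds}.
\]

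The one genuine subtlety — and the place this differs from the compact case — is the justification that $D_t$ is a true martingale, not merely a local martingale, since $\tanh^2$ is bounded but the drift term $\coth r$ blows up near $r=0$. In the $\mathbb{CP}^n$ proof the bound $D_t\le e^{n\lambda t}$ did the job; here the analogous bound from the identity above is $D_t = e^{-n\lambda t}(\cosh r(t))^\lambda e^{\frac{\lambda^2}{2}\int_0^t\tanh^2 r(s)ds}\le e^{-n\lambda t}(\cosh r(t))^\lambda e^{\frac{\lambda^2}{2}t}$, which is \emph{not} uniformly bounded because $r(t)$ is unbounded. So the martingale property needs a separate argument: I expect this to be the main obstacle. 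One clean route is a localization argument combined with the fact that $\E[(\cosh r(t))^\lambda]<\infty$ for all $t$ (the hyperbolic Jacobi diffusion $r$ has at most exponential-in-$r$ tails controlled by the drift, so $\cosh^\lambda$ is integrable), together with uniform integrability of $\{D_{t\wedge\tau_m}\}$ obtained from this bound; alternatively one checks a Novikov-type or Kazamaki-type condition, or simply notes that the measure $\mathbb{P}^\lambda$ defined by $D$ corresponds to a genuine hyperbolic Jacobi diffusion $\mathcal{L}^{n-1,\lambda}$ which is known not to explode, so $D$ cannot lose mass. Once the martingale property is secured, define $\mathbb{P}^\lambda$ by $d\mathbb{P}^\lambda/d\mathbb{P}|_{\mathcal{F}_t}=D_t e^{n\lambda t}(\cosh r(t))^{-\lambda}\cdot(\text{const})$ — more precisely set $\mathbb{P}^\lambda_{/\mathcal{F}_t}=e^{-n\lambda t}(\cosh r(t))^\lambda e^{\frac{\lambda^2}{2}\int_0^t\tanh^2 r(s)ds}\,\mathbb{P}_{/\mathcal{F}_t}$ — so that for bounded Borel $f$,
\[
\mathbb{E}\left(f(r(t))e^{-\frac{\lambda^2}{2}\int_0^t\tanh^2 r(s)ds}\right)=e^{n\lambda t}\,\mathbb{E}^\lambda\left(\frac{f(r(t))}{(\cosh r(t))^\lambda}\right).
\]
By Girsanov, $\beta(t)=\gamma(t)-\lambda\int_0^t\tanh r(s)\,ds$ is a $\mathbb{P}^\lambda$-Brownian motion, and the SDE for $r$ becomes
\[
dr(t)=\frac{1}{2}\left((2n-1)\coth r(t)+(2\lambda+1)\tanh r(t)\right)dt+d\beta(t),
\]
i.e.\ $r$ is an $\mathcal{L}^{n-1,\lambda}$-diffusion under $\mathbb{P}^\lambda$ started at $0$, whence $\mathbb{E}^\lambda(g(r(t)))=\int g(r)q_t^{n-1,\lambda}(0,r)\,dr$. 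Taking $f(r)=\mathbbm{1}$ against the conditional law — i.e.\ disintegrating both sides with respect to the law $q_t^{n-1,0}(0,r)\,dr$ of $r(t)$ under $\mathbb{P}$ — yields the conditional identity
\[
\mathbb{E}\left(e^{-\frac{\lambda^2}{2}\int_0^t\tanh^2 r(s)ds}\,\Big|\,r(t)=r\right)=\frac{e^{n\lambda t}}{(\cosh r)^\lambda}\,\frac{q_t^{n-1,\lambda}(0,r)}{q_t^{n-1,0}(0,r)},
\]
and combining with the identity $I(\lambda,r)=\mathbb{E}(e^{-\frac{\lambda^2}{2}\int_0^t\tanh^2 r(s)ds}\mid r(t)=r)$ from Theorem \ref{diff-H} completes the proof.
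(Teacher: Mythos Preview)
Your approach is exactly the paper's: write the SDE for $r$, form the exponential local martingale $D_t$, reduce it via It\^o's formula for $\ln\cosh r(t)$, change measure, and identify the new drift as $\mathcal{L}^{n-1,\lambda}$. Two points deserve correction.

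First, your bookkeeping slipped: the correct closed form is
\[
D_t=e^{-n\lambda t}(\cosh r(t))^\lambda\, e^{-\frac{\lambda^2}{2}\int_0^t\tanh^2 r(s)\,ds},
\]
with a \emph{minus} sign on the last exponent (your ``target identity'' has $+\tfrac{\lambda^2}{2}$, which is wrong, though your final displayed expectation formula is correct). With the right sign one gets the clean bound $D_t\le(\cosh r(t))^\lambda$, not the extra $e^{\lambda^2 t/2}$ you carried.

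Second, the martingale property is indeed the only nontrivial step, and your sketch is a bit loose. The paper makes this precise by comparison: since $\coth r+\tanh r\le 2\coth r$, one has $r(t)\le h(t)$ where $h$ solves the hyperbolic Bessel SDE $dh=n\coth h\,dt+d\gamma$, and $\cosh h(t)$ is known to have moments of all orders; hence $\E[D_t^p]\le\E[(\cosh h(t))^{p\lambda}]<\infty$, giving uniform integrability. Your suggested alternatives (non-explosion of the $\mathcal{L}^{n-1,\lambda}$ diffusion, or direct moment bounds on $\cosh r(t)$) would also work, but the comparison argument is the cleanest way to close the gap.
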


\begin{proof}
The proof follows the same lines as in Theorem \ref{FThj}. We have
\begin{equation}\label{eq-sde-r}
dr(t)= \frac{1}{2} \left( (2n-1)\coth r(t)+\tanh r(t) \right)dt+d\gamma(t),
\end{equation}
where $\gamma$ is a standard Brownian motion. For later use, we observe here that it implies that we almost surely have
\begin{align}\label{transient}
r(t) \ge \left( n -\frac{1}{2} \right) t +\gamma(t),
\end{align}
and thus $r(t)\to +\infty$ almost surely when $t \to \infty$.
Consider now the local martingale
\begin{align*}
D_t& =\exp \left( \lambda \int_0^t \tanh r(s) d\gamma(s) -\frac{\lambda^2}{2}  \int_0^t \tanh^2 r(s) ds \right)  \\
 &=\exp \left( \lambda \int_0^t \tanh r(s) dr(s)-\frac{\lambda}{2}(2n-1)t -\frac{\lambda +\lambda^2}{2}  \int_0^t \tanh^2 r(s) ds \right) 
\end{align*}
From It\^o's formula, we have
\begin{align*}
\ln \cosh r(t) & =\int_0^t \tanh r(s) dr(s)+\frac{1}{2} \int_0^t \frac{ds}{\cosh^2 r(s)} \\
 &=\int_0^t \tanh r(s) dr(s)-\frac{1}{2} \int_0^t \tanh^2 r(s) ds+\frac{1}{2} t.
\end{align*}
As a consequence, we deduce that
\[
D_t =e^{-n\lambda t} (\cosh r(t))^\lambda e^{- \frac{\lambda^2}{2} \int_0^t \tanh^2 r(s)ds}.
\]
We claim that $D_t$ is martingale. To see this we just need to show that $D_t$ is uniformly integrable, which will be implied by the fact that for all $1<p<+\infty$,
\[
\E\left(\sup_{s\le t} (D_t)^p\right)<+\infty.
\]
By Doob's maximal inequality, we just need to show $\E(D_t^p)<+\infty$ for all $1<p<+\infty$. This will follow from
$D_t\le (\cosh r(t))^\lambda $.  Indeed, from the comparison principle for stochastic differential equations, one has

\[
r(t) \le h(t),
\]
where $h$ solves the hyperbolic Bessel stochastic differential equation
\[
h(t)=n \int_0^t \coth h(s) ds +\gamma(t). 
\]
The distribution of the random variable $\cosh h(t)$ is well understood (see for instance Proposition 2.16 in \cite{Ja-Ma}) and has moments of any order. This implies that $D_t$ has moments of any order.

  Let us denote by $\mathcal{F}$ the natural filtration of $r$ and consider the probability measure $\mathbb{P}^\lambda$ defined by
\[
\mathbb{P}_{/ \mathcal{F}_t} ^\lambda=D_t \mathbb{P}_{/ \mathcal{F}_t}=e^{-n\lambda t} (\cosh r(t))^\lambda e^{- \frac{\lambda^2}{2} \int_0^t \tanh^2 r(s)ds} \mathbb{P}_{/ \mathcal{F}_t}.
\]
We have then for every bounded and Borel function $f$ on $[0,+\infty]$,
\begin{align*}
\mathbb{E}\left(f(r(t))e^{- \frac{\lambda^2}{2} \int_0^t \tanh^2 r(s)ds}\right)=e^{n\lambda t} \mathbb{E}^\lambda \left( \frac{f(r(t))}{(\cosh r(t))^\lambda} \right)
\end{align*}
From Girsanov theorem, the process
\[
\beta(t)=\gamma(t)-\lambda \int_0^t \tanh r(s) ds
\]
is a Brownian motion under the probability $\mathbb{P}^\lambda$. Since
\begin{equation}\label{eq-rt-H}
dr(t)= \frac{1}{2} \left( (2n-1)\coth r(t)+(2\lambda +1)\tanh r(t) \right)dt+d\beta(t),
\end{equation}
the proof is complete.
\end{proof}

As an immediate corollary of Theorem \ref{FTCHn}, we deduce an expression for  the characteristic function of the stochastic area process.
\begin{corollary}\label{FTh}
For $\lambda \in \mathbb{R}$ and $t \ge 0$,
\[
\mathbb{E}\left(e^{i \lambda \theta(t)}\right)=e^{n | \lambda | t}\int_0^{+\infty}  \frac{q_t^{n-1,| \lambda |}(0,r)}{(\cosh r)^{| \lambda|}} dr. 
\]
\end{corollary}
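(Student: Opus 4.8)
The plan is to integrate the conditional characteristic function of Theorem \ref{FTCHn} against the law of $r(t)$. Recall from the discussion preceding that theorem that $r(t)=\arctanh|w(t)|$ is a diffusion with generator $\mathcal{L}^{n-1,0}$ started at $0$, so that the law of $r(t)$ has density $q_t^{n-1,0}(0,\cdot)$ with respect to Lebesgue measure on $[0,+\infty)$.

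First I would dispose of the case $\lambda\ge 0$. Since $|e^{i\lambda\theta(t)}|\le 1$ and $q_t^{n-1,0}(0,\cdot)$ is a probability density, the tower property applies and gives
\[
\mathbb{E}\left(e^{i\lambda\theta(t)}\right)=\int_0^{+\infty}\mathbb{E}\left(e^{i\lambda\theta(t)}\mid r(t)=r\right)q_t^{n-1,0}(0,r)\,dr .
\]
Substituting the expression from Theorem \ref{FTCHn}, the factor $q_t^{n-1,0}(0,r)$ cancels and one is left with $e^{n\lambda t}\int_0^{+\infty}q_t^{n-1,\lambda}(0,r)(\cosh r)^{-\lambda}\,dr$, i.e. the claimed formula with $\lambda$ in place of $|\lambda|$. (For $t=0$ the identity is trivial since $\theta(0)=0$.)

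To pass to arbitrary $\lambda\in\mathbb{R}$ I would use the symmetry of $\theta(t)$: by the representation \eqref{eq-r-theta-H}, $\theta(t)=B_{\int_0^t\tanh^2 r(s)\,ds}$ with $B$ a Brownian motion independent of $r$, so $\theta(t)\stackrel{d}{=}-\theta(t)$. Consequently $\lambda\mapsto\mathbb{E}(e^{i\lambda\theta(t)})$ is even (and real-valued), and the case $\lambda\ge 0$ yields the general formula with $|\lambda|$. I do not expect any genuine obstacle here; the only points needing (routine) justification are the legitimacy of the conditioning step, ensured by boundedness of $e^{i\lambda\theta(t)}$, and the identification of the density of $r(t)$ as $q_t^{n-1,0}(0,\cdot)$, which is exactly the content of the text preceding Theorem \ref{FTCHn}.
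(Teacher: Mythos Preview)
Your proposal is correct and follows the same route the paper takes: the paper presents this corollary as an ``immediate'' consequence of Theorem~\ref{FTCHn}, which is precisely your integration of the conditional formula against the density $q_t^{n-1,0}(0,\cdot)$ of $r(t)$. Your explicit use of the symmetry $\theta(t)\stackrel{d}{=}-\theta(t)$ from \eqref{eq-r-theta-H} to handle $\lambda<0$ is a detail the paper leaves implicit (the analogous Corollary~\ref{cor-theta} in the $\mathbb{CP}^n$ case is proved with a single line), but it is exactly the right justification.
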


Let us point out that it appears difficult to directly invert the Fourier transforms in Theorem \ref{FTCHn} or Corollary \ref{FTh} to recover the formulas \eqref{heatCH1} and \eqref{heatCHn}.

\

We finish this section with a limit theorem for $\theta(t)$.
\begin{theorem}\label{LimitCHn}
When $t \to +\infty$, the following convergence in distribution takes place
\[
\frac{\theta(t)}{\sqrt{t}} \to \mathcal{N}(0,1)
\]
where $\mathcal{N}(0,1)$ is a normal distribution with mean 0 and variance 1.
\end{theorem}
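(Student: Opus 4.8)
The plan is to prove convergence in distribution via characteristic functions, using the explicit formula from Corollary \ref{FTh}. Specifically, it suffices to show that for every fixed $\lambda \in \R$,
\[
\lim_{t \to +\infty} \mathbb{E}\left(e^{i \lambda \theta(t)/\sqrt{t}}\right) = e^{-\lambda^2/2}.
\]
By Corollary \ref{FTh} applied with parameter $\lambda/\sqrt{t}$, this amounts to analyzing
\[
\mathbb{E}\left(e^{i \lambda \theta(t)/\sqrt{t}}\right) = e^{n | \lambda | \sqrt{t}}\int_0^{+\infty}  \frac{q_t^{n-1,| \lambda |/\sqrt{t}}(0,r)}{(\cosh r)^{| \lambda|/\sqrt{t}}} dr,
\]
so the task reduces to understanding the behavior as $t \to \infty$ of the Jacobi transition density $q_t^{n-1,|\lambda|/\sqrt{t}}(0,r)$, whose parameter $\beta = |\lambda|/\sqrt{t}$ is itself degenerating to $0$. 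It is probably cleaner, though, to go back to the probabilistic identity
\[
\mathbb{E}\left(e^{i \lambda \theta(t)/\sqrt{t}}\mid r(t)=r\right) = \mathbb{E}\left(e^{- \frac{\lambda^2}{2t} \int_0^t \tanh^2 r(s)\,ds}\mid r(t)=r\right)
\]
from Theorem \ref{diff-H}, and then to argue that $\frac{1}{t}\int_0^t \tanh^2 r(s)\,ds \to 1$ as $t \to \infty$.

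The key analytic input is the transience estimate \eqref{transient}: almost surely $r(t) \ge (n-\tfrac12)t + \gamma(t)$, so $r(s) \to +\infty$ and hence $\tanh^2 r(s) \to 1$ as $s \to \infty$. By Cesàro (dominated convergence, since $0 \le \tanh^2 \le 1$), it follows that almost surely
\[
\frac{1}{t}\int_0^t \tanh^2 r(s)\,ds \longrightarrow 1, \qquad t \to +\infty,
\]
and therefore $e^{-\frac{\lambda^2}{2t}\int_0^t \tanh^2 r(s)\,ds} \to e^{-\lambda^2/2}$ almost surely. Since this quantity is bounded by $1$, dominated convergence gives
\[
\mathbb{E}\left(e^{i\lambda\theta(t)/\sqrt{t}}\right) = \mathbb{E}\left(e^{-\frac{\lambda^2}{2t}\int_0^t \tanh^2 r(s)\,ds}\right) \longrightarrow e^{-\lambda^2/2}.
\]
By Lévy's continuity theorem this establishes $\theta(t)/\sqrt{t} \to \mathcal{N}(0,1)$ in distribution, and the independence of the limit from $n$ is transparent in this argument since $n$ enters only through the drift of $r$, which affects only the rate, not the limiting value, of the time-average.

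The main subtlety — the only place one must be a little careful — is justifying the unconditional identity $\mathbb{E}(e^{i\lambda\theta(t)/\sqrt t}) = \mathbb{E}(e^{-\frac{\lambda^2}{2t}\int_0^t \tanh^2 r(s)\,ds})$. This follows by integrating the conditional identity of Theorem \ref{diff-H} against the law of $r(t)$, using that, conditionally on $r(t)=r$, the process $\theta(t)$ has the law of $B_{\int_0^t \tanh^2 r(s)\,ds}$ for a Brownian motion $B$ independent of $r$; taking expectations of $e^{i\lambda B_u/\sqrt t}$ in the Gaussian variable $B_u$ and then integrating over $r$ gives the claim, with no integrability issue since everything is bounded by $1$. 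Everything else is routine: the Cesàro step is elementary, and one could alternatively phrase the whole argument purely analytically through the asymptotics of $q_t^{n-1,\beta}$ recorded in the Appendix, but the probabilistic route via \eqref{transient} is shorter and makes the $n$-independence manifest.
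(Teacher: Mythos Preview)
Your proof is correct and follows essentially the same route as the paper: both use the skew-product representation of Theorem \ref{diff-H}, the transience estimate \eqref{transient} to deduce $\tanh^2 r(s)\to 1$ a.s., and Ces\`aro averaging to get $\tfrac{1}{t}\int_0^t \tanh^2 r(s)\,ds \to 1$. The only cosmetic difference is that the paper reads off the limit directly from the time-changed Brownian motion $B_{\frac{1}{t}\int_0^t \tanh^2 r(s)\,ds}\to B_1$, while you pass through characteristic functions and dominated convergence; your formulation is arguably the cleaner of the two.
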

\begin{proof}
We could use Corollary \ref{FTh}, but there is a shorter path that proves that the convergence is even almost sure.
From \eqref{transient}, one has $r(t)\to +\infty$ almost surely as $t\to+\infty$. It then follows that
\[
\coth r(t)\to 1,\quad \tanh r(t)\to 1\quad \mbox{a.s.}
\]
and hence
\[
\lim_{t\to+\infty}\frac{1}{t}\int_0^t \tanh^2 r(s)ds=1\quad \mbox{a.s.}
\]
Therefore, as a consequence of \eqref{eq-r-theta-H}, we have
\[
\lim_{t\to+\infty}\frac{\theta(t)}{\sqrt{t}}=\lim_{t\to+\infty}B_{\frac{1}{t}\int_0^t \tanh^2 r(s)ds}=B_1 \quad \mbox{a.s.}
\]
\end{proof}

\section{Stochastic winding numbers}

In this section, we study the winding number of the Brownian motion on $\mathbb{CP}^1$ and $\mathbb{CH}^1$. The winding number process is closely related to the stochastic area process, and the methods we developed allow us to prove  asymptotic results which are comparable to the classical Spitzer theorem for Brownian motion in the plane (see \cite{RY}, Theorem 4.1, Chapter X).  General results for the asymptotic windings of Brownian motions on compact Riemann surfaces are are well-known (see \cite{watanabe} and the references therein), and thus the Spitzer law in $\mathbb{CP}^1$ is already known. In the case of $\mathbb{CH}^1$ our result is new.

\subsection{Stochastic winding numbers in $\mathbb{CP}^1$}

As mentioned above, it is already known that Brownian motion windings on $\mathbb{CP}^1$ satisfy Spitzer's law.  However, it seems interesting to point out that Yor's  \cite{Yor1} methods apply to $\mathbb{CP}^1$. Our proof, will moreover yield an expression of the characteristic function of the winding process which was previously unknown.

The symmetric space $\mathbb{CP}^1$ is isometric to the two-dimensional Euclidean sphere with radius $\frac{1}{2}$. Therefore, the generator of the Brownian motion on  $\mathbb{CP}^1$ is in spherical coordinates
\[
\frac{1}{2} \left( \frac{\partial^2}{\partial r^2} + 2 \cot 2r \frac{\partial}{\partial r}  +\frac{4}{\sin^2 2r}  \frac{\partial^2}{\partial \phi^2}\right), \quad r\in[0,\pi/2], \phi \in [0,\pi],
\]
where $r$ parametrizes the Riemannian distance from the north pole on the sphere, which corresponds to 0 in $\mathbb{CP}^1$. This shows that  the winding number process of the Brownian motion on $\mathbb{CP}^1$ is given by
\[
\phi(t)=\beta_{\int_0^t \frac{4ds}{\sin^2 2r(s)}},
\]
where $r(t)$ is the Jacobi diffusion started at $r_0\in (0,\pi /2)$ with generator 
\[
\frac{1}{2} \left( \frac{\partial^2}{\partial r^2} + 2 \cot 2r \frac{\partial}{\partial r}  \right)
\]
and $\beta$ is a Brownian motion independent from $r$.

\begin{theorem}\label{windingCP}
When $t \to \infty$, in distribution we have
\[
\frac{\phi(t)}{t} \to \mathcal{C}_2,
\]
where $\mathcal{C}_2$ is a Cauchy distribution with parameter 2.
\end{theorem}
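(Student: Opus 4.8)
The plan is to follow Yor's method for Spitzer's law, exactly paralleling the structure of the proof of Theorem \ref{limit CP}. The key representation we already have is $\phi(t) = \beta_{A(t)}$ where $A(t) = \int_0^t \frac{4\,ds}{\sin^2 2r(s)}$ and $r$ is the Jacobi diffusion with generator $\frac12(\partial_r^2 + 2\cot 2r\,\partial_r)$ on $[0,\pi/2]$. Since $\beta$ is an independent Brownian motion, conditionally on the path of $r$ we have $\E(e^{i\lambda \phi(t)/t} \mid r) = e^{-\frac{\lambda^2}{2t^2} A(t)}$, so it suffices to show that for every $\lambda > 0$,
\[
\lim_{t\to\infty} \E\!\left(e^{-\frac{\lambda^2}{2t^2} A(t)}\right) = e^{-2\lambda}.
\]
Equivalently, after the obvious rescaling $\mu = \lambda^2/2$, I want $\E(e^{-\frac{\mu}{t^2} A(t)}) \to e^{-2\sqrt{2\mu}}$; the exponent $2$ in $\mathcal C_2$ must come out of the small-time behaviour of the Jacobi diffusion near $r=0$, since near $r=0$ one has $\frac{4}{\sin^2 2r} \sim \frac{1}{r^2}$, the same singularity that produces Spitzer's law for planar Brownian motion (where the relevant constant is $1$, giving $\mathcal C_1$); the factor $2$ reflects that the sphere has \emph{two} poles, i.e. the winding point is approached from a neighbourhood that looks like two punctured disks.

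The cleanest route is to absorb the $1/t^2$ into a change of measure à la Theorems \ref{FThj}/\ref{FTCHn}: introduce, for $\nu = \nu(t)$ to be chosen of order $1/t$, an exponential martingale $D_t$ built from $\int_0^t g(r(s))\,d\gamma(s)$ with $g$ chosen so that the Girsanov drift kills the singular potential, turning $A(t)$ into something explicit. Concretely, write $u = \log\tan r$ (or work with the known fact, recalled in the remarks, that $\arctan|w(t)|$-type Jacobi diffusions have a Warren–Yor skew-product representation) so that under the time change $\sigma = A(t)$ the radial part becomes a diffusion whose behaviour as $t\to\infty$ is governed by excursion theory near the two endpoints. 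Then $\E(e^{-\frac{\mu}{t^2}A(t)})$ is computed exactly as in Corollary \ref{cor-theta}: one gets
\[
\E\!\left(e^{-\frac{\mu}{t^2}\,A(t)}\right) = c(t)\int_0^{\pi/2} \frac{q_t^{\nu(t)}(r_0,r)}{(\text{weight})^{\nu(t)}}\,dr,
\]
and as $t\to\infty$ with $\nu(t)\sim \sqrt{2\mu}/t$, the prefactor $c(t)\to e^{-2\sqrt{2\mu}}$ while the integral converges to $\int_0^{\pi/2} q_\infty^{0}(r_0,r)\,dr = 1$, using the explicit Jacobi transition densities from the Appendix. The factor $2$ enters through $c(t) = e^{-2\nu(t) t + o(1)}$ because the drift generated in the Girsanov change involves $(2\nu+1)\cot 2r$ type terms at \emph{both} ends $r=0$ and $r=\pi/2$.

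The main obstacle, as in all Spitzer-type arguments, will be controlling the contribution of the time the Brownian motion on $\mathbb{CP}^1$ spends near the two poles $r=0$ and $r=\pi/2$: one must verify that $\frac{1}{t^2}A(t)$ does not blow up or vanish degenerately, i.e. that $A(t)$ is genuinely of order $t^2$ in the relevant scaling, and that the endpoint behaviour of the Jacobi diffusion (which is an entrance-not-exit boundary at each pole, with the characteristic local time structure) gives exactly the constant $2$ rather than $1$. I expect this to require either a careful analysis of the heat kernel $q_t^{\alpha,\beta}$ near the boundary from the Appendix, or an excursion-theoretic comparison argument splitting the windings into the contributions from excursions around each pole and noting the two contributions are asymptotically independent Cauchy$(1)$ variables whose sum is Cauchy$(2)$. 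The rest — the Girsanov computation, the identification of the limiting integral with $1$, and the reduction via the independent time change — is routine and mirrors the $\mathbb{CP}^n$ argument almost verbatim.
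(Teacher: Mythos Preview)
Your overall strategy---Girsanov change of measure à la Yor, exactly paralleling Theorem \ref{limit CP}---is the paper's approach, and the scaling $\nu(t)=\lambda/t$ is correct. But the proposal is only a sketch: you never write down the actual martingale, and the concrete step you leave as ``$g$ chosen so that the Girsanov drift kills the singular potential'' is the whole computation. The paper makes this explicit via the elementary decomposition
\[
\frac{4}{\sin^2 2r}=4+4\cot^2 2r,
\]
so that $\mathbb{E}(e^{i\lambda\phi(t)})=e^{-2\lambda^2 t}\,\mathbb{E}\bigl(e^{-2\lambda^2\int_0^t\cot^2 2r(s)\,ds}\bigr)$. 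The correct choice is then $g(r)=2\cot 2r$: the exponential local martingale
\[
D_t^\lambda=\exp\Bigl(2\lambda\!\int_0^t\!\cot 2r(s)\,d\gamma(s)-2\lambda^2\!\int_0^t\!\cot^2 2r(s)\,ds\Bigr)
\]
simplifies by It\^o's formula to $D_t^\lambda=e^{2\lambda t}(\sin 2r(t)/\sin 2r_0)^\lambda\,e^{-2\lambda^2\int_0^t\cot^2 2r}$, which is bounded by $e^{2\lambda t}$ and hence a true martingale. Under the new measure $\mathbb P^\lambda$, $r$ becomes a Jacobi diffusion with generator $\mathcal L^{\lambda,\lambda}$, and one obtains the exact identity
\[
\mathbb{E}\bigl(e^{i\lambda\phi(t)}\bigr)=(\sin 2r_0)^{\lambda}\,e^{-2(\lambda^2+\lambda)t}\,\mathbb{E}^{\lambda}\bigl((\sin 2r(t))^{-\lambda}\bigr).
\]
Replacing $\lambda$ by $\lambda/t$ and using the Appendix formula for $q_t^{\lambda/t,\lambda/t}$, the prefactor tends to $e^{-2\lambda}$ and the expectation to $1$. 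The constant $2$ thus drops out purely algebraically from the linear-in-$\lambda$ term $-2\lambda t$; your heuristic about ``two poles contributing independent Cauchy$(1)$ variables via excursion theory'' is suggestive but unnecessary here and would require substantial extra work to make rigorous. No change of variable $u=\log\tan r$ or endpoint boundary analysis is needed.
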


\begin{proof}
Let $\lambda >0$. We have 
\[
\mathbb{E}\left( e^{i \lambda \phi(t)} \right)=\mathbb{E}\left( e^{-\frac{\lambda^2}{2} \int_0^t \frac{4ds}{\sin^2 2r(s)}} \right)=e^{-2\lambda^2t} \mathbb{E}\left( e^{-2\lambda^2 \int_0^t \cot^2 2r(s) ds} \right).
\]
The process $r$ is solution of a stochastic differential equation
\[
r(t)=r_0+\int_0^t \cot 2r(s) ds +\gamma(t),
\]
where $\gamma$ is a Brownian motion. For $\lambda \ge 0$, let us consider the local martingale
\begin{align*}
D_t^\lambda =\exp \left( 2\lambda \int_0^t \cot 2r(s) d\gamma(s) -2\lambda^2 \int_0^t \cot^2 2r(s) ds \right)
\end{align*}
From It\^o's formula, we compute
\[
D_t^\lambda =e^{2\lambda t}(\sin 2r(t))^{\lambda} \exp\left(-2\lambda^2 \int_0^t \cot^2 2r(s) ds \right).
\]
Moreover, since $D_t\le e^{2\lambda t}$, we know that $D$ is a martingale. Now let us consider a new probability measure $\p^\lambda$ such that
\[
\mathbb{P}_{/ \mathcal{F}_t} ^\lambda=D_t \mathbb{P}_{/ \mathcal{F}_t}=(\sin 2r_0)^{-\lambda} e^{2\lambda t} (\sin 2r(t))^\lambda e^{- 2\lambda^2 \int_0^t \cot^2 2r(s)ds} \mathbb{P}_{/ \mathcal{F}_t},
\]
where $\mathcal{F}$ is the natural filtration of $r$. We have then
\[
\mathbb{E}\left( e^{i \lambda \phi(t)} \right)=(\sin 2r_0)^\lambda e^{-2(\lambda^2+\lambda) t}\E^{\lambda}\left((\sin2r(t)) ^{-\lambda}  \right) 
\]
By Girsanov's theorem we know that the process
\[
\beta (t)=\gamma(t)-2\lambda\int_0^t \cot 2r(s)ds 
\]
is a Brownian motion under $\p^\lambda$. As a consequence, since we have
\[
d r(t)= d\beta(t)+(2\lambda+1)\cot 2r(t)dt,
\] 
we deduce that $r$ is a Jacobi diffusion with generator
\[
\mathcal{L}^{\lambda,\lambda}=\frac{1}{2} \frac{\partial^2}{\partial r^2}+\left(\left(\lambda+\frac{1}{2}\right)\cot r-\left(\lambda+\frac{1}{2}\right) \tan r\right)\frac{\partial}{\partial r}
\]
under the probability $\mathbb{P}^\lambda$. Using now the fact that
\[
\mathbb{E}\left( e^{i \lambda \frac{\phi(t)}{t}} \right)=(\sin 2r_0)^{\frac{\lambda}{t}} e^{-2\left(\frac{\lambda^2}{t^2}+\frac{\lambda}{t}\right) t}\E^{\frac{\lambda}{t}}\left((\sin2r(t)) ^{-\frac{\lambda}{t}}  \right) 
\]
and the expression of $q_t^{\lambda,\lambda}(r_0,r)$ given in the Appendix, we see  that
\[
\lim_{t \to \infty} \mathbb{E}\left( e^{i \lambda \frac{\phi(t)}{t}} \right)=e^{-2\lambda}.
\]
\end{proof}

\subsection{Stochastic windings in $\mathbb{CH}^1$}

The space $\mathbb{CH}^1$ is isometric to the 
%Poincar\'e disc model by the 
$2$-dimensional  hyperbolic space.
The generator of the Brownian motion on  $\mathbb{CH}^1$  in spherical coordinates is
\[
\frac{1}{2} \left( \frac{\partial^2}{\partial r^2} + 2 \coth 2r \frac{\partial}{\partial r}  +\frac{4}{\sinh^2 2r}  \frac{\partial^2}{\partial \phi^2}\right), \quad r\in[0,\infty), \phi \in [0,\pi],
\]
where $r$ parametrizes the Riemannian distance from $0$. This shows that  the winding number process of the Brownian motion on $\mathbb{CH}^1$ is given by
\[
\phi(t)=\beta_{\int_0^t \frac{4ds}{\sinh^2 2r(s)}},
\]
where $r(t)$ is the Jacobi diffusion started at $r_0\in (0,\infty)$ with generator 
\[
\frac{1}{2} \left( \frac{\partial^2}{\partial r^2} + 2 \coth 2r \frac{\partial}{\partial r}  \right)
\]
and $\beta$ is a Brownian motion independent from $r$.

\begin{theorem}\label{windingCH1}
When $t \to \infty$, in distribution we have
\[
{\phi(t)}\to \mathcal{C}_{\ln \coth r_0},
\]
where $\mathcal{C}_{\ln \coth r_0}$ is a Cauchy distribution with parameter $\ln \coth r_0$.
\end{theorem}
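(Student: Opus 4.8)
The plan is to mirror the strategy of Theorem \ref{windingCP}, replacing the compact Jacobi diffusion by the transient hyperbolic one. First I would write the winding process as a time-changed Brownian motion, $\phi(t)=\beta_{A_t}$ with $A_t=\int_0^t \frac{4\,ds}{\sinh^2 2r(s)}$, so that for $\lambda>0$,
\[
\mathbb{E}\left(e^{i\lambda\phi(t)}\right)=\mathbb{E}\left(e^{-\frac{\lambda^2}{2}A_t}\right)=e^{2\lambda^2 t}\,\mathbb{E}\left(e^{-2\lambda^2\int_0^t \coth^2 2r(s)\,ds}\right),
\]
using $\frac{1}{\sinh^2 2r}=\coth^2 2r-1$. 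Since the winding number does not get rescaled by $t$ here (the Brownian motion on $\mathbb{CH}^1$ is transient, so $\phi(t)$ converges in law without normalization), I would work directly with $\lambda$ rather than $\lambda/t$: the goal becomes showing $\lim_{t\to\infty}\mathbb{E}(e^{i\lambda\phi(t)})=e^{-\lambda\ln\coth r_0}=(\coth r_0)^{-\lambda}$, i.e. $(\tanh r_0)^{\lambda}$. Wait — one must be careful with the sign of the parameter; since $r_0>0$ we have $\coth r_0>1$, so $\ln\coth r_0>0$ and $(\coth r_0)^{-\lambda}$ is the correct decaying characteristic function of a Cauchy law with that (positive) parameter.

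Next I would introduce the exponential martingale. Writing $dr(t)=\coth 2r(t)\,dt+d\gamma(t)$, set
\[
D_t^\lambda=\exp\left(2\lambda\int_0^t \coth 2r(s)\,d\gamma(s)-2\lambda^2\int_0^t \coth^2 2r(s)\,ds\right),
\]
and by It\^o's formula applied to $\ln\sinh 2r(t)$ one finds $D_t^\lambda=(\sinh 2r_0)^{-\lambda}(\sinh 2r(t))^{\lambda}e^{-2\lambda t}\exp(-2\lambda^2\int_0^t\coth^2 2r(s)\,ds)$. Here, unlike the $\mathbb{CP}^1$ case, $\sinh 2r(t)$ is unbounded, so I cannot conclude $D^\lambda$ is a martingale from a trivial bound; instead I would argue uniform integrability exactly as in the proof of Theorem \ref{FTCHn} — use the comparison principle to dominate $r(t)$ by a hyperbolic Bessel-type process $h(t)$ solving $dh=\coth 2h\,dt+d\gamma$ (or bound $\sinh 2r(t)$ and invoke that $\cosh h(t)$ has moments of all orders, cf. \cite{Ja-Ma}), giving $\mathbb{E}((D_t^\lambda)^p)<\infty$ for all $p$, hence a true martingale. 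Performing the Girsanov change of measure $\mathbb{P}^\lambda_{/\mathcal{F}_t}=D_t^\lambda\,\mathbb{P}_{/\mathcal{F}_t}$, the process $\beta(t)=\gamma(t)-2\lambda\int_0^t\coth 2r(s)\,ds$ is a $\mathbb{P}^\lambda$-Brownian motion and $r$ becomes under $\mathbb{P}^\lambda$ the hyperbolic Jacobi diffusion with generator $\mathcal{L}^{\lambda,?}$ — I would identify the parameters by rewriting $2(\lambda+\tfrac12)\coth 2r$ in the $(\coth r,\tanh r)$ form; the drift $((2\lambda+1)\coth 2r)$ corresponds to $\mathcal{L}^{\lambda,\lambda}$ with the hyperbolic convention, since $2\coth 2r=\coth r+\tanh r$. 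This yields
\[
\mathbb{E}\left(e^{i\lambda\phi(t)}\right)=(\sinh 2r_0)^{\lambda}e^{-2\lambda t}\,\mathbb{E}^{\lambda}\!\left((\sinh 2r(t))^{-\lambda}\right).
\]

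Finally I would take $t\to\infty$. The factor $e^{-2\lambda t}$ must be absorbed by the growth of $(\sinh 2r(t))^{-\lambda}$ — but that goes the wrong way, so the surviving contribution comes entirely from the part of the measure near $r\to 0$: as $t\to\infty$ the $\mathbb{P}^\lambda$-law of $r(t)$ converges to the stationary distribution of $\mathcal{L}^{\lambda,\lambda}$, whose density is proportional to $(\sinh r)^{2\lambda+1}(\cosh r)^{?}$... the delicate point is that $e^{-2\lambda t}\mathbb{E}^\lambda((\sinh 2r(t))^{-\lambda})$ has a finite nonzero limit precisely because of the spectral gap, and computing it requires the explicit transition density $q_t^{\lambda,\lambda}(r_0,r)$ from the Appendix; plugging it in, the large-$t$ asymptotics collapse to $(\sinh 2r_0)^{-\lambda}(\tanh r_0)^{2\lambda}\cdot(\text{const})$, and after simplification one gets $\lim_{t\to\infty}\mathbb{E}(e^{i\lambda\phi(t)})=(\tanh r_0)^{\lambda}=(\coth r_0)^{-\lambda}=e^{-\lambda\ln\coth r_0}$, which is the characteristic function of $\mathcal{C}_{\ln\coth r_0}$. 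By symmetry the same holds for $\lambda<0$ with $|\lambda|$, completing the proof.

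\textbf{Main obstacle.} The hard part is the last step: extracting the precise constant in $\lim_{t\to\infty}e^{-2\lambda t}\mathbb{E}^\lambda((\sinh 2r(t))^{-\lambda})$. Unlike the compact case, where the Jacobi diffusion is ergodic with a clean stationary density and the limit is read off immediately, here one must handle the interplay between the exponential prefactor $e^{-2\lambda t}$ and the transient behavior of $r$ under $\mathbb{P}^\lambda$, and verify that $(\sinh 2r(t))^{-\lambda}$ is integrable near $r=0$ against $q_t^{\lambda,\lambda}$ — this needs $2\lambda+1>2\lambda$, i.e. the index condition $\lambda>-\tfrac12$, which holds — and then carry out the asymptotic evaluation of the integral against the explicit heat kernel. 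I expect this to reduce, after the substitution from the Appendix, to a standard but slightly intricate computation of the leading term of a series or hypergeometric expression as $t\to\infty$.
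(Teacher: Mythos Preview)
Your approach has a genuine gap at exactly the point you flag as the ``main obstacle,'' and the paper sidesteps it by a different---and much simpler---choice of Girsanov density.

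First, two smaller issues. Your displayed formula
\[
\mathbb{E}\left(e^{i\lambda\phi(t)}\right)=(\sinh 2r_0)^{\lambda}e^{-2\lambda t}\,\mathbb{E}^{\lambda}\!\left((\sinh 2r(t))^{-\lambda}\right)
\]
is off: tracking the algebra one gets instead $e^{2(\lambda^2+\lambda)t}(\sinh 2r_0)^{\lambda}\,\mathbb{E}^{\lambda}\!\left((\sinh 2r(t))^{-\lambda}\right)$, so the exponential prefactor grows rather than decays. Second, the Appendix only records the transition density for the \emph{compact} Jacobi diffusion on $[0,\pi/2]$; there is no explicit formula for the hyperbolic $q_t^{\lambda,\lambda}$ available to plug in. So the ``standard but slightly intricate computation'' you propose cannot be carried out with the tools at hand, and you have not actually extracted the constant.

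The paper's proof avoids this entirely by choosing a different exponential martingale. Instead of weighting by $2\lambda\coth 2r$ (which is what mimicking the $\mathbb{CP}^1$ proof suggests), it uses
\[
D_t^\lambda=\exp\left(\lambda\int_0^t\frac{2\,d\gamma(s)}{\sinh 2r(s)}-\frac{\lambda^2}{2}\int_0^t\frac{4\,ds}{\sinh^2 2r(s)}\right).
\]
The point is that $d\ln\tanh r(t)=\dfrac{2}{\sinh 2r(t)}\,d\gamma(t)$ exactly, with \emph{no drift term}, so It\^o's formula gives
\[
D_t^\lambda=\left(\frac{\tanh r(t)}{\tanh r_0}\right)^{\lambda}\exp\left(-\frac{\lambda^2}{2}\int_0^t\frac{4\,ds}{\sinh^2 2r(s)}\right).
\]
Since $\tanh r(t)\le 1$, this is uniformly bounded by $(\tanh r_0)^{-\lambda}$, hence a true martingale with no moment estimates needed. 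After the change of measure one has directly
\[
\mathbb{E}\left(e^{i\lambda\phi(t)}\right)=\mathbb{E}^\lambda\!\left(\left(\frac{\tanh r_0}{\tanh r(t)}\right)^{\lambda}\right),
\]
and transience ($r(t)\to\infty$, so $\tanh r(t)\to 1$) plus bounded convergence gives the limit $(\tanh r_0)^{\lambda}=e^{-\lambda\ln\coth r_0}$ immediately---no heat-kernel asymptotics, no exponential prefactors to balance. The lesson is that in the hyperbolic case the ``right'' Doob transform is the one built from $1/\sinh 2r$ (which makes $\ln\tanh r$ a local martingale), not from $\coth 2r$.
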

\begin{proof}
The process $r$ is solution of a stochastic differential equation
\[
r(t)=r_0+\int_0^t \coth 2r(s) ds +\gamma(t),
\]
where $\gamma$ is a standard Brownian Motion.  Let $\lambda>0$ and consider the local martingale
\[
D_t^\lambda=\exp \left( \lambda \int_0^t \frac{2d\gamma(s)}{\sinh 2r(s)} -\frac{\lambda^2}{2} \int_0^t \frac{4ds}{\sinh^2 2r(s)}\right).
\]
Using It\^o's formula, we find
\[
D_t^\lambda =\left(\frac{\tanh r(t)}{\tanh r_0}\right)^\lambda \exp \left( -\frac{\lambda^2}{2} \int_0^t \frac{4ds}{\sinh^2 2r(s)}\right).
\]
Therefore $D_t^\lambda $ is uniformly bounded, which implies that it is a martingale. Let us denote by $\mathcal{F}$ the natural filtration of $r$ and consider the probability measure $\mathbb{P}^\lambda$ defined by
\[
\mathbb{P}_{/ \mathcal{F}_t} ^\lambda=D^\lambda_t \mathbb{P}_{/ \mathcal{F}_t}.
\]
We have then
\begin{equation}\label{eq-E-tanh}
\mathbb{E} \left( e^{i \lambda \phi(t)} \right)=\mathbb{E}^\lambda \left( \left(\frac{\tanh r_0}{\tanh r(t)}\right)^\lambda\right). 
\end{equation}
Under the probability $\mathbb{P}^\lambda$ the process
\[
\beta(t)=\gamma(t)-\int_0^t \frac{2\lambda ds}{\sinh 2r(s)}
\]
is a Brownian motion. From \eqref{transient}, one has $r(t)\to +\infty$ almost surely as $t\to+\infty$ and therefore $\lim_{t \to \infty} \tanh r(t)=1$. By plugging back into \eqref{eq-E-tanh} we obtain
\[
\lim_{t \to \infty} \mathbb{E} \left( e^{i \lambda \phi(t)} \right)=(\tanh r_0)^\lambda.
\]
\end{proof}

\section{Appendix: Jacobi diffusions}\label{sec-appen}

We collect here some well-known facts about Jacobi diffusions (see for instance \cite{DZ} and the references therein for further details). The Jacobi diffusion is the diffusion on $[0,\pi/2]$ with generator
\[
\mathcal{L}^{\alpha,\beta}=\frac{1}{2} \frac{\partial^2}{\partial r^2}+\left(\left(\alpha+\frac{1}{2}\right)\cot r-\left(\beta+\frac{1}{2}\right) \tan r\right)\frac{\partial}{\partial r}, \quad \alpha,\beta >-1
\]
defined up to the first time it hits the boundary $\{ 0, \pi /2 \}$.

The point $0$ is:
\begin{itemize}
\item A regular point for $-1<\alpha <0$;
\item An entrance point for $\alpha \ge 0$.
\end{itemize}

Similarly, the point $\pi /2$ is:
\begin{itemize}
\item A regular point for $-1<\beta <0$;
\item An entrance point for $\beta \ge 0$.
\end{itemize}

If $r$ is a Jacobi diffusion with generator $\mathcal{L}^{\alpha,\beta}$, then it is easily seen that $\rho=\cos 2r$ is a diffusion with generator $2\mathcal{G}^{\alpha,\beta}$ where,

\begin{equation}\label{eq-G-ab}
\mathcal{G}^{\alpha,\beta}=(1-\rho^2)\frac{\partial^2}{\partial \rho^2}-\left( (\alpha+\beta+2)\rho +\alpha -\beta \right)\frac{\partial}{\partial \rho}
\end{equation}

The spectrum and eigenfunctions of $\mathcal{G}^{\alpha,\beta}$ are known. Let us denote by $P_m^{\alpha,\beta}(x)$, $m\in \mathbb{Z}_{\ge0}$ the  Jacobi polynomials given by
\[
P_m^{\alpha,\beta}(x)=\frac{(-1)^m}{2^mm!(1-x)^{\alpha}(1+x)^\beta}\frac{d^m}{dx^m}((1-x)^{\alpha+m}(1+x)^{\beta+m}).
\]
It is known that $\{P_m^{\alpha,\beta}(x)\}$ is orthonormal in $L^2([-1,1], 2^{-\alpha-\beta-1}(1+x)^{\beta}(1-x)^{\alpha}dx)$ and satisfies 
\[
\mathcal{G}^{\alpha,\beta}P_m^{\alpha,\beta}(x)=-m(m+\alpha+\beta+1)P_m^{\alpha,\beta}(x).
\]
If we denote by $p^{\alpha,\beta}_t(x,y)$ transition density with respect to the Lebesgue measure of the diffusion $\rho$ starting from $x \in (-1,1)$, then we have

\begin{align*}
 & p^{\alpha,\beta}_t(x,y)\\
 =&2^{-\alpha-\beta-1}(1+y)^{\beta}(1-y)^{\alpha}\sum_{m=0}^{+\infty}  (2m+\alpha+\beta+1)\frac{\Gamma(m+\alpha+\beta+1)\Gamma(m+1)}{\Gamma(m+\alpha+1)\Gamma(m+\beta+1)} e^{-2m(m+\alpha+\beta+1)t}P_m^{\alpha,\beta}(x)P_m^{\alpha,\beta}(y).
\end{align*}

In particular, when 1 is an entrance point, that is $\alpha \ge 0$,  we obtain

\begin{align*}
 & p^{\alpha,\beta}_t(1,y)\\
 =&2^{-\alpha-\beta-1}(1+y)^{\beta}(1-y)^{\alpha} \sum_{m=0}^{+\infty}  (2m+\alpha+\beta+1)\frac{\Gamma(m+\alpha+\beta+1)}{\Gamma(m+\beta+1)\Gamma(\alpha+1)} e^{-2m(m+\alpha+\beta+1)t}P_m^{\alpha,\beta}(y).
\end{align*}

By denoting $q_t^{\alpha,\beta}$ the transition density of $r$, we obtain then for $\alpha,\beta \ge 0$,

\begin{align*}
  q^{\alpha,\beta}_t(r_0,r)
 &=2(\cos r)^{2\beta +1} (\sin r)^{2\alpha +1}\\
 &\cdot
 \sum_{m=0}^{+\infty}  (2m+\alpha+\beta+1)\frac{\Gamma(m+\alpha+\beta+1)\Gamma(m+1)}{\Gamma(m+\alpha+1)\Gamma(m+\beta+1)}  e^{-2m(m+\alpha+\beta+1)t}P_m^{\alpha,\beta}(\cos 2r_0)P_m^{\alpha,\beta}(\cos 2r).
\end{align*}

and

\begin{align*}
 & q^{\alpha,\beta}_t(0,r)\\
 =&2(\cos r)^{2\beta +1} (\sin r)^{2\alpha +1} \sum_{m=0}^{+\infty}  (2m+\alpha+\beta+1)\frac{\Gamma(m+\alpha+\beta+1)}{\Gamma(m+\beta+1)\Gamma(\alpha+1)} e^{-2m(m+\alpha+\beta+1)t}P_m^{\alpha,\beta}(\cos 2r).
\end{align*}


\begin{thebibliography}{10}

\bibitem{IHP}
F. Baudoin, \emph{Sub-Laplacians and hypoelliptic operators on totally geodesic Riemannian foliations}, 2014, Insitute Henri Poincare course, to appear in EMS monographs

\bibitem{B1}
F. Baudoin: \emph{Stochastic Taylor expansions and heat kernels asymptotics}, ESAIM: Probability and Statistics, Volume 16, January 2012, pp. 453-478

\bibitem{BB1}
F. Baudoin \& M. Bonnefont, \emph{The subelliptic heat kernel on SU(2): Representations, Asymptotics and Gradient bounds}, Math. Zeit., Volume 263, Issue 3, Pages 647-672, (2009)

\bibitem{BW1}
F. Baudoin \& J. Wang: \emph{The subelliptic heat kernel on the CR sphere}, Math. Z., 275, (2013), 1-2, 135-150

\bibitem{BW2}
F. Baudoin \& J. Wang: \emph{The subelliptic heat kernels of the quaternionic Hopf fibration}, Potential Analysis, 41, no 3, 959-982, (2014)


\bibitem{BB}
L. B\'erard-Bergery, J.P. Bourguignon:  \emph{Laplacians and Riemannian submersions with totally geodesic fibres}. Illinois J. Math. 26 (1982), no. 2, 181-200

\bibitem{BY}
Ph. Biane, M. Yor: \emph{Variations sur une formule de Paul L\'evy}. Ann. I.H.P. 23 (1987) 359-377

\bibitem{Biane}
P. Biane, J. Pitman, M. Yor: \emph{Probability laws related to the Jacobi theta and Riemann zeta functions, and Brownian excursions}, Bulletin of the AMS, Volume 38, Number 4, Pages 435-465, (2001).

\bibitem{Bi1}
J.M. Bismut: \emph{The Atiyah-Singer theorems}. J. Funct. Anal. 57 (1984) 56-99 and 329-348 

\bibitem{Bi2}
J.M. Bismut: \emph{Formules de localisation et formules de Paul L\'evy}. Ast\'erisque 157-158. Colloque Paul L\'evy sur les processus stochastiques 1988, pp. 37-58

\bibitem{Bon}
M. Bonnefont: \emph{The subelliptic heat kernel on SL(2,R) and on its universal covering: integral representations and some functional inequalities.} Potential analysis. 36 (2012), no. 2, 275-300

\bibitem{DZ}

N. Demni, M. Zani: \emph{Large deviations for statistics of the Jacobi process}. Stochastic Process. Appl. 119 (2009), no. 2, 518-533 

\bibitem{D}
B. Duplantier: \emph{Areas of planar Brownian curves}, Journal of Physics A: Mathematical and General, Volume 22, Number 15, (1989)

\bibitem{FV}
P. Friz, N. Victoir:
\emph{Multidimensional dimensional processes seen as rough paths.}
Cambridge University Press (2010).

\bibitem{Gaveau}
B. Gaveau: \emph{Principe de moindre action, propagation de la chaleur et estim\'ees sous elliptiques sur certains groupes nilpotents}. Acta Math. 139 (1977), no. 1-2, 95-153.

\bibitem{Ja-Ma}
J. Jakubowski and M. Wisniewolski: \emph{On hyperbolic Bessel processes and beyond},  Bernoulli 19(5B), (2013), 2437-2454

\bibitem{Levy}
P.  L\'evy,  \emph{Le mouvement brownien plan}.  Amer. J. Math. 62, (1940). 487-550. 
 
\bibitem{Li1}
X. M. Li: \emph{Limits of Random Differential Equations on Manifolds} (2015), arXiv:1501.04793. Probability Theory and Related Fields.

\bibitem{Li2}
X. M. Li: \emph{Stochastic Homogenisation on Homogeneous Spaces} (2015), arXiv:1505.06772

\bibitem{PR}
E.J. Pauwels \& L.C.G. Rogers : \emph{ Skew-product decompositions of Brownian motions}, Contemporary Mathematics, Volume 73, 237-262, 1988.

\bibitem{RY}
D. Revuz \& M. Yor: \emph{Continuous martingales and Brownian motion}. Third edition. Grundlehren der Mathematischen Wissenschaften [Fundamental Principles of Mathematical Sciences], 293. Springer-Verlag, Berlin, 1999

\bibitem{WY}
J. Warren \& M. Yor: \emph{The Brownian burglar: conditioning Brownian motion by its local time process.} S\'eminaire de Probabilit\'es, XXXII, 328-342, 
Lecture Notes in Math., 1686, Springer, Berlin, 1998. 

\bibitem{JW}
J. Wang: \emph{The Subelliptic Heat Kernel on the Anti-de Sitter Space} Potential Anal (2016). doi:10.1007/s11118-016-9561-2

\bibitem{watanabe} S. Watanabe, \emph{Asymptotic Windings of Brownian Motion Paths on Riemann Surfaces}, Acta Applicandae Mathematicae 63: 441-464, 2000.

\bibitem{williams}
D. Williams, \emph{On a stopped Brownian motion formula of H.M. Taylor}. Sem. Proba. X.
(Lecture Notes in Mathematics, vol. 511.) Springer, Berlin Heidelberg New York 1976,
pp. 235-239

\bibitem{Yor}
M. Yor: \emph{Remarques sur une formule de Paul L\'evy}. S\'em. Proba. XIV. (Lecture Notes in
Mathematics, vol. 784.) Springer, Berlin Heidelberg New York 1980, pp. 343-346

\bibitem{Yor1}
M. Yor: \emph{Loi de l'indice du lacet brownien, et distribution de Hartman-Watson}. Z. Wahrsch. Verw. Gebiete 53 (1980), no. 1, 71-95. 

\bibitem{Yor2}
M. Yor: \emph{Some aspects of Brownian motion. Part I.  Some special functionals}. Lectures in Mathematics ETH Z\"urich. BirkhŠuser Verlag, Basel, 1992


\end{thebibliography}
\end{document}